\newtheorem{theorem}{Theorem}
\newtheorem{lemma}[theorem]{Lemma}
\newtheorem{proposition}[theorem]{Proposition}
\theoremstyle{definition}
\newtheorem{remark}[theorem]{Remark}
\numberwithin{equation}{section}
\newtheorem{construction}{Construction}
\title{On high-girth expander graphs with localized eigenvectors}
\author{Shohei Satake
}
\address{
Faculty of Advanced Science and Technology, 
Kumamoto University, 2-39-1, Kurokami, Chuo, Kumamoto, 860-8555, Japan}
\email{satakecomb@gmail.com}
\keywords{Expander graphs; girth; localized eigenvectors; near-Ramanujan graphs; the second largest eigenvalue}
\subjclass[2020]{05C48}
\begin{document}

\maketitle

\begin{abstract}
The main purpose of this paper is to construct high-girth regular expander graphs with localized eigenvectors for general degrees, which is inspired by a recent work due to Alon, Ganguly and Srivastava (to appear in Israel J. Math.).

\end{abstract}

\section{Introduction}
For $d \geq 2$, let $G$ be a $(d+1)$-regular graph.
Let $V(G)$ and $E(G)$ denote the vertex and edge set of $G$, respectively.
The {\it girth} of $G$, the length of the shortest cycle of $G$, is denoted by ${\rm girth}(G)$.
The {\it adjacency matrix of $G$}, denoted by $A(G)$, is a square $0$-$1$ matrix with rows and columns indexed by vertices of $G$ in which the $(u, v)$-entry is $1$ if and only if $u$ and $v$ are adjacent in $G$. 
Throughout this paper, we consider normalized eigenvectors of $A(G)$ with respect to $l_2$ norm.

Brooks and Lindenstrauss~\cite{BL2013} proved that for any eigenvector $\mathbf{v}=(v_x)_{x \in V(G)}$ of $A(G)$, real number $\varepsilon\in (0, 1)$ and subset $S \subset V(G)$ such that $||\mathbf{v}_S||^2_2:=\sum_{x \in S}v_x^2\geq \varepsilon$, it holds that
\begin{equation}
\label{eq:BL2013}
    |S| \geq \Omega_d(\varepsilon^2 d^{2^{-7}\varepsilon^2 {\rm girth}(G)}) \; \;\;\;\; (|V(G)|\to \infty).
\end{equation}
In other words, if $A(G)$ has a {\it $(k, \varepsilon)$-localized eigenvector} which is an eigenvector such that there exists $S\subset V(G)$ with $|S|=k$ and $||\mathbf{v}_S||^2_2 \geq \varepsilon$, then it must hold that $k \geq \Omega_d(\varepsilon^2 d^{2^{-7}\varepsilon^2 {\rm girth}(G)})$.
In \cite{GS2019}, Ganguly and Srivastava improved (\ref{eq:BL2013}), that is, under the same assumption, they proved the following inequality.
\begin{equation}
\label{eq:GS2019}
    |S| \geq \frac{\varepsilon d^{(\frac{\varepsilon}{4} \cdot {\rm girth}(G))}}{2d^2}.
\end{equation}
Moreover, for arbitrary $\varepsilon>0$ and $d\geq 2$, the authors of \cite{GS2019} also showed the existence of infinitely many positive integers $m$ and $(d+1)$-regular graphs $G_m$ with $m$ vertices such that ${\rm girth}(G_m)\geq (1/8)\log_d(m)$ and $A(G_m)$ has a $(k, \varepsilon)$-localized eigenvectors with $k=O(d^{4\varepsilon \cdot {\rm girth}(G_m)})$ as $m \to \infty$, which shows that the bound (\ref{eq:BL2013}) is sharp up to the constant $\varepsilon/d^2$.

While the above construction is based on a probabilistic method, Alon, Ganguly and Srivastava~\cite{AGS2020} gave a deterministic construction of $(d+1)$-regular graphs with large girth and localized eigenvectors for every odd prime $d$, improving the lower bound of girth as well as the upper bound of $k$ in the construction in \cite{GS2019}. 
\begin{theorem}[\cite{AGS2020}]
\label{thm-AGS2020}
Let $d$ be an odd prime and $\alpha \in (0, 1/6)$. Then for every $\varepsilon \in (0, 1)$, there exist infinitely many positive integers $m$ and $(d+1)$-regular graphs $G_m$ with $m$ vertices satisfying the following conditions for each $m$.
\begin{enumerate}
    \item ${\rm girth}(G_m)\geq 2\alpha\log_{2d-1}(m) \geq 2\alpha\log_{d}(m) \cdot (1-\frac{\log 2}{\log(2d-1)})$;
    
    \item there exist $\lfloor \alpha \log_d(m) \rfloor$ eigenvalues $\lambda_1, \lambda_2, \ldots, \lambda_{\lfloor \alpha \log_d(m) \rfloor}$ of $A(G_m)$ such that each has a corresponding $(k, \varepsilon)$-localized eigenvectors with $k=O(m^\alpha)$ as $m \to \infty$.
\end{enumerate}
\end{theorem}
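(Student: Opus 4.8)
The plan is to build each $G_m$ by a controlled local modification of an explicit $(d+1)$-regular graph of girth $\Omega(\log_d(\cdot))$ — for $d$ an odd prime such families (Ramanujan or near-Ramanujan Cayley graphs of Lubotzky--Phillips--Sarnak and Morgenstern type) are available, and this is where the primality enters — by excising a small set of edges and splicing in a \emph{gadget} $H$ on $\Theta(m^\alpha)$ vertices whose deficient vertices $\partial H$ (those not yet of degree $d+1$) absorb the freshly exposed half-edges, so that $G_m$ is again $(d+1)$-regular. The spectral mechanism is extension by zero: if $W\subseteq\mathbb R^{V(H)}$ is a subspace that is invariant under $A(H)$ and consists of vectors vanishing on $\partial H$, then any eigenvector of $A(H)$ lying in $W$, extended by zero to $V(G_m)$, is a genuine eigenvector of $A(G_m)$ for the same eigenvalue; it is supported on $V(H)$, hence $(|V(H)|,1)$-localized and a fortiori $(O(m^\alpha),\varepsilon)$-localized for every $\varepsilon\in(0,1)$, and distinct eigenvalues in $W$ produce distinct such localized eigenvalues. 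The natural way to obtain $W$ is from an order-two automorphism $\sigma$ of $H$ with $\partial H\subseteq\mathrm{Fix}(\sigma)$: then $W=\{f:f\circ\sigma=-f\}$ is $A(H)$-invariant and every member vanishes on $\mathrm{Fix}(\sigma)\supseteq\partial H$, while $A(H)|_W$ is unitarily equivalent to a signed adjacency operator on the quotient $H/\sigma$, whose spectrum is what we must analyse.

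In this order, the steps are: (i) \emph{design $H$}: a graph on $\Theta(m^\alpha)$ vertices of girth at least $2\alpha\log_{2d-1}m$, carrying an order-two automorphism $\sigma$ whose fixed set is small but contains all deficient vertices, and with $A(H)|_W$ having at least $\lfloor\alpha\log_d m\rfloor$ distinct eigenvalues (for the last point it suffices that $H/\sigma$ contains, say, a long induced path, since the path contributes that many distinct eigenvalues); (ii) \emph{splice $H$ into the base graph}: delete $O(m^\alpha)$ edges to expose the half-edges needed by $\partial H$, and attach $H$ so that the attachment points of $G_m$ are pairwise far apart in the base graph — possible by a greedy net argument, since the ball of radius $2\alpha\log_{2d-1}m$ has $\approx d^{2\alpha\log_{2d-1}m}=m^{O(\alpha)}$ vertices, which for $\alpha<1/6$ is $o(m)$ — so that every cycle of $G_m$ either lies inside one of the two high-girth pieces or passes through the attachment and hence has length at least the base-graph distance between two attachment points, i.e.\ at least $2\alpha\log_{2d-1}m$; the appearance of the base $2d-1$ (rather than $d$) in the girth bound is what drops out of the vertex count of the explicit family used for the base graph and the $(d+1)$-regular filler; (iii) \emph{count}: $|V(G_m)|=m$ and $|V(H)|=\Theta(m^\alpha)$ give $k=O(m^\alpha)$, and $\dim W$ supplies $\lfloor\alpha\log_d m\rfloor$ distinct eigenvalues, which is conclusion (2).

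I expect step (i) to be the crux, and in fact the place where most of the real work sits. A finite sparse graph necessarily has vertices of degree $1$ or $2$, which become deficient and therefore must lie in $\mathrm{Fix}(\sigma)$; but they also have to be ``padded'' up to degree $d+1$, and padding tends either to reintroduce deficient vertices outside $\mathrm{Fix}(\sigma)$ or to make $H$ dense, in which case $H$ inherits short cycles and the girth requirement fails (indeed no explicit $(d+1)$-regular family of girth $g$ on $N$ vertices achieves $g\ge 2\log_{2d-1}N$, so $H$ is forced to be genuinely non-regular). Arranging the outgoing half-edges of $H$ and $\mathrm{Fix}(\sigma)$ to coincide while keeping $A(H)|_W$ spectrally rich is the delicate engineering problem, and this is presumably where the explicit algebraic structure available for an odd prime $d$ is used, rather than as a black box. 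A more robust alternative that bypasses exact $A(H)$-invariance is a quasimode bound: take the base graph near-Ramanujan, so that apart from the planted eigenvalues $\mathrm{spec}(A(G_m))\subseteq\{d+1\}\cup[-2\sqrt d-o(1),2\sqrt d+o(1)]$, produce $\mathbf w$ with $A(H)\mathbf w=\mu\mathbf w$, $\mu\in(2\sqrt d,d+1)$, and $\mathbf w$ exponentially small on $\partial H$; then $\|A(G_m)\tilde{\mathbf w}-\mu\tilde{\mathbf w}\|_2=m^{-\Omega(\alpha)}$ and, $\mu$ being at a fixed distance from the bulk, $A(G_m)$ has a true eigenvector within $m^{-\Omega(\alpha)}$ of $\tilde{\mathbf w}$, hence $(|V(H)|,\varepsilon)$-localized once $m$ is large relative to $\varepsilon$. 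But $\mu>2\sqrt d$ forces $H$ to contain a (necessarily long) cycle, so the tension between density and girth — now together with the need to keep several such $\mu$'s separated from one another and from the bulk — remains the heart of the matter on this route as well.
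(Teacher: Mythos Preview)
Your overall framework is sound and matches the paper's approach: start from an LPS Ramanujan graph, perform a local modification of size $\Theta(m^\alpha)$, and produce localized eigenvectors by extending by zero from an invariant subspace that vanishes on the boundary of the modified region. But you explicitly leave step~(i) --- the gadget design --- as an unresolved ``delicate engineering problem'', and that is precisely the content of the argument; your proposal is a correct outline with its central step missing.

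The actual gadget (Construction~\ref{const-AGS2020}, following \cite{AGS2020}) dissolves your girth-versus-involution tension by \emph{not} requiring a graph automorphism. One takes the ball of radius $r=\lfloor\alpha\log_d n\rfloor$ around a vertex of the base graph, which (since ${\rm girth}(H)>4r$) is already a $d$-ary tree $T_1$ with leaf set $L_1$; one deletes a matching $M$ between $L_1$ and a set $L_2$ at distance $r+1$; and one attaches two fresh $d$-ary trees $T_2,T_3$ of depth $r$ with leaf sets $L_1$ and $L_2$ respectively ($T_3$ serves only to restore regularity at $L_2$). The localized eigenvectors live on $V(T_1)\cup V(T_2)$ and are \emph{radial}: the value at a vertex depends only on its distance to the root of its tree, with opposite signs on $T_1$ and $T_2$. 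Such a function automatically vanishes on $L_1$, and at each $\ell\in L_1$ its two nonzero neighbours --- the parent at level $r-1$ in $T_1$ and the parent at level $r-1$ in $T_2$ --- contribute $g(r-1)-g(r-1)=0$ \emph{whichever} vertex of $T_2$ that parent happens to be. Hence the $r$-dimensional radial-antisymmetric subspace is $A(G)$-invariant regardless of how $T_2$ is glued to $L_1$, so the gluing may be chosen freely to force ${\rm girth}(G_{1,2})\ge 2\log_{2d-1}|L_1|$ (Lemma~2.1 of \cite{AGS2020}). Your intuition that the quotient operator should look like a path is exactly right: here $A|_W$ is an $r\times r$ Jacobi matrix, giving $r=\lfloor\alpha\log_d m\rfloor$ distinct eigenvalues. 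The point you were missing is that radiality, unlike a genuine order-two automorphism, survives an arbitrary high-girth twist of the leaf identification.

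Two smaller corrections. First, contrary to your guess, the primality of $d$ is used only as a black box to obtain the LPS base graph; the gadget is purely combinatorial and works for every $d\ge 2$ --- this is exactly why the present paper can extend the theorem to non-prime $d$ by substituting a different base graph (Propositions~\ref{prop-seedexpander1} and~\ref{prop-seedexpander}). Second, the quasimode alternative you sketch is unnecessary here: the localized eigenvectors are exact, not approximate.
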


The main purpose of this paper is to extend Theorem~\ref{thm-AGS2020} to more general degrees.
The following is the first main theorem in this paper.

\begin{theorem}
\label{thm-main1}
Let $p_j$ denote the $j$-th prime.
Let $d \in [p_j, p_{j+1}]$ be an integer and $\alpha \in (0, 1/6)$.
Suppose that $\frac{\log(p_j)}{\log(p_{j+1})}>6\alpha$.
Then for every $\varepsilon \in (0, 1)$, there exist infinitely many positive integers $m$ and $(d+1)$-regular graphs $G_m$ with $m$ vertices satisfying the same conditions as Theorem~\ref{thm-AGS2020}.
In particular, the same claim holds for every $d\geq p_{10}=29$ and $\alpha \in (0, \frac{\log (p_{10})}{6\log(p_{11})})$ where $\frac{\log (p_{10})}{6\log(p_{11})} \approx 0.163$. 
\end{theorem}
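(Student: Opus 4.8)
The plan is to leave the deterministic surgery of Alon, Ganguly and Srivastava untouched and only change the arithmetic object it is applied to. Inspecting the proof of Theorem~\ref{thm-AGS2020}, the oddness and primality of $d$ are used in a single place: the construction begins from an explicit $(d+1)$-regular near-Ramanujan Cayley graph of logarithmic girth (an LPS/Morgenstern-type graph, which is where the prime degree is needed), and then grafts on a short tree-like gadget by a purely local operation that creates the localized eigenvectors --- Dirichlet eigenvectors of the gadget, extended by zero --- while preserving, up to constant factors, both the girth and the spectral gap. The gadget and this surgery make sense for every degree. So for a general integer $d\in[p_j,p_{j+1}]$ the strategy is to first build a $(d+1)$-regular high-girth expander out of the nearby prime $p_{j+1}$, and then run the surgery of~\cite{AGS2020} on it unchanged.

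For the base graph, start from an explicit $(p_{j+1}+1)$-regular Ramanujan graph $X_N$ on $N$ vertices with ${\rm girth}(X_N)\ge c\log_{p_{j+1}}N$ for the usual LPS constant $c$; taking the bipartite LPS family (or else passing to the bipartite double cover, which doubles $N$ without lowering the girth), we may assume $X_N$ is bipartite and $(p_{j+1}+1)$-regular. By K\"onig's edge-colouring theorem $X_N$ is a union of $p_{j+1}+1$ perfect matchings, and deleting any $p_{j+1}-d$ of them yields a $(d+1)$-regular bipartite graph $B_N$. Deleting edges cannot decrease the girth, so ${\rm girth}(B_N)\ge c\log_{p_{j+1}}N$; and by Weyl's inequality $\lambda_2(B_N)\le\lambda_2(X_N)+(p_{j+1}-d)\le 2\sqrt{p_{j+1}}+(p_{j+1}-p_j)$, which for $d\ge 29$ is bounded by a fixed fraction of $d+1$ (both $\sqrt{p_{j+1}}$ and the prime gap being $o(d)$). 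Hence $B_N$ is a $(d+1)$-regular, high-girth expander --- exactly the type of input the surgery expects.

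Now run the surgery of~\cite{AGS2020} on $B_N$ with the gadget rescaled: apply it with parameter $\alpha_0$ chosen to be any number in $\bigl(\alpha\tfrac{\log p_{j+1}}{\log p_j},\,\tfrac16\bigr)$, an interval that is nonempty \emph{precisely because} $\tfrac{\log p_j}{\log p_{j+1}}>6\alpha$. This produces, for infinitely many $m=\Theta(N)$, a $(d+1)$-regular graph $G_m$; the resulting girth and eigenvalue count differ from the bounds in Theorem~\ref{thm-AGS2020} only in that $\log_d$ is replaced by $\log_{p_{j+1}}$ (the Ramanujan graph we started from being $(p_{j+1}+1)$-regular) and $\alpha$ by $\alpha_0$. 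Since the choice of $\alpha_0$ guarantees $\alpha_0\tfrac{\log d}{\log p_{j+1}}\ge\alpha$, these convert into ${\rm girth}(G_m)\ge 2\alpha\log_{2d-1}m\ge 2\alpha\log_d m\bigl(1-\tfrac{\log 2}{\log(2d-1)}\bigr)$, exactly as in Theorem~\ref{thm-AGS2020} (the $m$-versus-$N$ slack absorbed by the strictness of the choice of $\alpha_0$), and $\lfloor\alpha_0\log_{p_{j+1}}N\rfloor\ge\lfloor\alpha\log_d m\rfloor$, so that after discarding the surplus we keep exactly $\lfloor\alpha\log_d m\rfloor$ localized eigenvalues. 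Each corresponding eigenvector is supported on $O(m^\alpha)$ vertices (in fact only polylogarithmically many, so that this bound is unaffected by the rescaling of $\alpha_0$), and hence is $(k,\varepsilon)$-localized with $k=O(m^\alpha)$ for every $\varepsilon\in(0,1)$, since such an eigenvector has all of its $\ell_2$-mass on that set.

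The one genuinely substantial step is the claim underlying the first paragraph: that the surgery of~\cite{AGS2020}, together with all of its quantitative estimates, uses the base graph only through the properties ``$(d+1)$-regular, girth $\gtrsim\log_d(\text{order})$, second eigenvalue bounded away from $d+1$'', so that the arithmetically different $B_N$ --- in particular its additively perturbed, slightly weaker spectral gap coming from the deleted $(p_{j+1}-d)$-factor --- is an admissible input. Granting this, the remainder is the elementary prime-gap and logarithm bookkeeping above, in which the threshold $\tfrac16$ for the parameter in~\cite{AGS2020} becomes exactly the hypothesis $\tfrac{\log p_j}{\log p_{j+1}}>6\alpha$; and the closing ``in particular'' is then obtained by specialization, noting $\tfrac{\log p_{10}}{6\log p_{11}}\approx0.163$ and checking (from known bounds on prime gaps, the ratio tending to $1$) that $\tfrac{\log p_j}{\log p_{j+1}}>6\alpha$ for $d$ past a small explicit bound.
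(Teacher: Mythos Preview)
Your overall strategy coincides with the paper's: build a $(d+1)$-regular high-girth expander from a $(p_{j+1}+1)$-regular LPS graph by stripping $p_{j+1}-d$ perfect matchings, then feed it into the Alon--Ganguly--Srivastava surgery unchanged. One genuine difference is how the matchings are produced. The paper stays with the non-bipartite LPS graphs and removes $1$-factors one at a time, invoking at each step the Cioab\u{a}--Gregory--Haemers eigenvalue criterion for a perfect matching; this is why Nagura's bound $p_{j+1}-p_j<p_j/5$ (hence $j\ge 10$) enters their Proposition on the seed graph. Your route via the bipartite family (or the bipartite double cover) and K\"onig's edge-colouring theorem is cleaner and sidesteps that auxiliary hypothesis entirely; it buys a simpler existence argument for the seed graph at essentially no cost.

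There is, however, a real slip in your parameter bookkeeping. Once $B_N$ is $(d+1)$-regular, the AGS gadget is a $d$-ary tree of depth $r=\lfloor\alpha_0\log_d N\rfloor$, and the localized eigenvectors are supported on $V(T_1)\cup V(T_2)$, which has size $\Theta(d^r)=\Theta(N^{\alpha_0})=\Theta(m^{\alpha_0})$. This is \emph{not} polylogarithmic in $m$; your parenthetical is false, and with $\alpha_0>\alpha$ you only obtain $(k,\varepsilon)$-localization with $k=O(m^{\alpha_0})$, strictly weaker than the required $k=O(m^{\alpha})$. The rescaling is in fact unnecessary and counterproductive. Because $B_N$ is already $(d+1)$-regular, the construction's internal constraint is ${\rm girth}(B_N)>4r$, i.e.\ $\tfrac{2}{3}\log_{p_{j+1}}N>4\alpha_0\log_d N$, which is $\alpha_0<\tfrac{1}{6}\cdot\tfrac{\log d}{\log p_{j+1}}$; since $d\ge p_j$, the hypothesis $\tfrac{\log p_j}{\log p_{j+1}}>6\alpha$ says precisely that $\alpha$ itself lies below this threshold. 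So run the surgery with $\alpha_0=\alpha$ directly (this is what the paper does): you then get $r=\lfloor\alpha\log_d N\rfloor$ eigenvalues, each with support of size $O(m^{\alpha})$, and girth $\ge 2\alpha\log_{2d-1} m$, with no conversion needed.
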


\begin{remark}
For all $d \geq 29$, the last claim of Theorem~\ref{thm-main1} provides infinitely many positive integers $m$ and $(d+1)$-regular graphs $G_m$ with $m$ vertices such that ${\rm girth}(G_m)>0.272 \cdot \log_d(m)$, still improving the lower bound of the girth in the construction provided in \cite{GS2019}, while $\alpha$ must be less than $0.163.<0.166\ldots=1/6$ here.
\end{remark}

Remarkably, the authors of \cite{AGS2020} also proved that for each $m$ and a $(d+1)$-regular graph $G_m$ in Theorem~\ref{thm-AGS2020}, $\lambda(G_m)$, the second largest eigenvalue of $A(G_m)$ in absolute value, is less than $\frac{3}{\sqrt{2}}\sqrt{d} \approx 2.121\sqrt{d}$. This means that Theorem~\ref{thm-AGS2020} also provides near-Ramanujan graphs, that is, regular graphs with near-optimal spectral gap; for details of spectral gaps and Ramanujan graphs, see e.g. \cite{DSV2003}, \cite{M2001} and references therein.

The following second main theorem extends Theorem~\ref{thm-AGS2020} to almost all degrees (see Remark~\ref{rem-primegap}), holding the same bound of the second largest eigenvalue.

\begin{theorem}
\label{thm-main}
Let $d\geq 2$ be an integer and $\alpha \in (0, 1/6)$.
Suppose $d\in [t, p]$ for some odd prime $p$ and positive integer $t<p$ such that $p-t<(\frac{5}{2\sqrt{6}}-1)\sqrt{t} \approx 0.02\sqrt{t}$ and $\frac{\log(t)}{\log(p)}>6\alpha$.
Then, for every $\varepsilon \in (0, 1)$, there exist infinitely many positive integers $m$ and $(d+1)$-regular graphs $G_m$ with $m$ vertices satisfying the same conditions as Theorem~\ref{thm-AGS2020}.
Moreover $\lambda(G_m)\leq \frac{3}{\sqrt{2}} \sqrt{d}$ holds for each $m$.
\end{theorem}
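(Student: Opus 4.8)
The plan is to derive Theorem~\ref{thm-main} from Theorem~\ref{thm-AGS2020} by a degree reduction: the hypotheses hand us an odd prime $p\ge d$ lying within $(\tfrac5{2\sqrt6}-1)\sqrt t$ of the integer $t\le d$, so one can start from the $(p+1)$-regular graphs produced by \cite{AGS2020} and carefully delete a $(p-d)$-regular subgraph. Fix such $p$ and $t$. Note $p<t+(\tfrac5{2\sqrt6}-1)\sqrt t<2d-1$ for $d\ge 2$, and since $t\le d$ the assumption $\tfrac{\log t}{\log p}>6\alpha$ gives $\alpha\tfrac{\log p}{\log d}\le\alpha\tfrac{\log p}{\log t}<\tfrac16$; fix an auxiliary exponent $\alpha'$ with $\alpha\tfrac{\log p}{\log d}\le\alpha'<\tfrac16$. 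A direct computation shows $\tfrac{\log(2p-1)}{\log(2d-1)}\le\tfrac{\log p}{\log d}\le\tfrac{\alpha'}{\alpha}$ for $p\ge d\ge 2$, so $\alpha'$ is designed to absorb the change of base from $p$ to $d$.

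First I would invoke Theorem~\ref{thm-AGS2020}, together with the second-eigenvalue estimate proved in \cite{AGS2020}, at the odd prime $p$ with parameter $\alpha'$: for infinitely many $m$ this yields a $(p+1)$-regular graph $G'_m$ on $m$ vertices with ${\rm girth}(G'_m)\ge 2\alpha'\log_{2p-1}(m)$, with at least $\lfloor\alpha'\log_p(m)\rfloor$ eigenvalues of $A(G'_m)$ carrying $\varepsilon$-localized eigenvectors supported on $O(m^{\alpha'})$ vertices, and with $\lambda(G'_m)<\tfrac3{\sqrt2}\sqrt p$. Then I would delete a spanning $(p-d)$-regular subgraph $H_m\subseteq G'_m$ chosen so that (i) $H_m$ is connected, non-bipartite, and a good expander -- so that the operator norm of $A(H_m)$ on $\mathbf 1^{\perp}$ is its (small) second eigenvalue, not merely $p-d$ -- and (ii) $H_m$ is edge-disjoint from every shortest cycle of $G'_m$ and from the bounded-size local gadgets that are responsible for the localized eigenvectors. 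Such an $H_m$ exists because $G'_m$ is $(p+1)$-regular of large girth: outside the $O(m^{\alpha'})$ special vertices one can peel off $p-d$ perfect matchings whose union is connected with no short cycle -- for instance via a deficiency version of the K\"onig/Vizing edge-colouring argument on a suitable bipartite auxiliary graph, or through an explicit rotation map. Put $G_m:=G'_m\setminus H_m$, a $(d+1)$-regular graph on the same $m$ vertices, and let $m$ range over the infinitely many admissible values.

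It then remains to verify the three conditions for $G_m$. Deleting edges cannot shorten a cycle, so ${\rm girth}(G_m)\ge{\rm girth}(G'_m)\ge 2\alpha'\log_{2p-1}(m)\ge 2\alpha\log_{2d-1}(m)\ge 2\alpha\log_d(m)\bigl(1-\tfrac{\log 2}{\log(2d-1)}\bigr)$, the middle inequality being exactly the point of having chosen $\alpha'\ge\alpha\tfrac{\log(2p-1)}{\log(2d-1)}$. For the localized eigenvectors, property (ii) guarantees that the relevant local configurations are untouched, so running the localization part of the \cite{AGS2020} argument directly inside $G_m$ -- with branching number $d$ and with the girth bound just proved -- produces at least $\lfloor\alpha\log_d(m)\rfloor$ eigenvalues of $A(G_m)$ (using $\lfloor\alpha'\log_p(m)\rfloor\ge\lfloor\alpha\log_d(m)\rfloor$, which holds since $\alpha'\ge\alpha\tfrac{\log p}{\log d}$) admitting $\varepsilon$-localized eigenvectors supported on $O(m^{\alpha})$ vertices; the improvement of the support bound from $m^{\alpha'}$ to $m^{\alpha}$ comes from re-deriving the localization inside the sparser graph $G_m$ rather than transporting the eigenvectors of $G'_m$ verbatim.

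The remaining and, I expect, hardest point is the spectral bound $\lambda(G_m)\le\tfrac3{\sqrt2}\sqrt d$. The crude estimate $\lambda(G_m)\le\lambda(G'_m)+\|A(H_m)\|\le\tfrac3{\sqrt2}\sqrt p+(p-d)$ is useless, since already $\tfrac3{\sqrt2}\sqrt p>\tfrac3{\sqrt2}\sqrt d$. Instead I would revisit the spectral analysis of \cite{AGS2020} with the underlying graph replaced by $G'_m\setminus H_m$: that analysis controls $\lambda$ through the non-backtracking operator of the underlying near-Ramanujan graph together with the contribution of the gadgets, deletion of $H_m$ perturbs the former by at most the second eigenvalue of $H_m$ (small, by (i)), and re-expressing the bounds in terms of $d$ instead of $p$ costs at most $\sqrt p-\sqrt t\le\tfrac{p-t}{2\sqrt t}$ inside each square root. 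The inequality $p-t<(\tfrac5{2\sqrt6}-1)\sqrt t$ is precisely the room these perturbations are allowed so that, once absorbed into the (sub-additive) structure of the \cite{AGS2020} estimate, the final bound remains $\tfrac3{\sqrt2}\sqrt d$. Thus the two delicate tasks are: checking that the \cite{AGS2020} spectral argument is stable under deleting a sparse regular subgraph from its base graph, and tracking the constants carefully enough to recover exactly $\tfrac5{2\sqrt6}-1$ rather than merely some unspecified $c\sqrt t$. Finally, the ``almost all degrees'' assertion of Remark~\ref{rem-primegap} then follows by feeding in standard results on prime gaps, which guarantee a prime in $[d,\,d+(\tfrac5{2\sqrt6}-1)\sqrt d)$ for a density-one set of integers $d$.
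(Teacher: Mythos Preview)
Your approach has a genuine structural gap: you propose to delete a spanning $(p-d)$-regular subgraph $H_m$ \emph{after} running the AGS construction, but this order of operations cannot be made to work.

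First, property (ii) is incompatible with $H_m$ being spanning and regular. At every vertex, including every vertex of the tree gadgets $T_1,T_2,T_3$, you must delete exactly $p-d$ incident edges. The gadget in $G'_m$ consists of $p$-ary trees glued in a specific way; after removing $p-d$ edges at each of their vertices you do not obtain $d$-ary trees, so there is nothing to which the localization lemmas of \cite{GS2019},\cite{AGS2020} apply ``for branching number $d$''. Your sentence ``outside the $O(m^{\alpha'})$ special vertices one can peel off\dots'' tacitly abandons regularity on the gadget. Property (i) is also impossible in the basic case $p-d=1$: a $1$-regular spanning subgraph is a perfect matching, hence disconnected, and its operator norm on $\mathbf 1^{\perp}$ equals $1=p-d$, so Weyl gives nothing beyond the crude bound you already dismissed.

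Second, the spectral step is not a perturbation argument of the type you sketch. Lemma~\ref{lem-AGS2020-1} bounds $|\mu|$ by $\lambda(H)$ plus boundary terms, where $H$ is the \emph{base} graph to which the gadgets are attached; it does not ``control $\lambda$ through the non-backtracking operator of the underlying near-Ramanujan graph'' in a way that survives deleting edges from the final object $G'_m$. The paper therefore performs the degree reduction \emph{before} Construction~\ref{const-AGS2020}: one strips $p-d$ $1$-factors from the LPS graph $R_n$ (their existence being guaranteed inductively by Theorem~\ref{thm-CGH2009}) to obtain a $(d+1)$-regular base graph $H_n$ with $\lambda(H_n)\le 2\sqrt{p}+(p-d)\le \tfrac{5}{\sqrt6}\sqrt d$. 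The constant $\tfrac{5}{2\sqrt6}-1$ in the hypothesis is exactly what makes this last inequality hold, and $\tfrac{5}{\sqrt6}=b(2)=\min_{d\ge 2}\tfrac{3d-1}{\sqrt{d(2d-1)}}$ is precisely the threshold in Lemma~\ref{lem-AGS2020-2}, so Lemmas~\ref{lem-AGS2020-1}--\ref{lem-AGS2020-2} then yield $\lambda(G)\le\tfrac{3}{\sqrt2}\sqrt d$. In short: delete matchings from the Ramanujan seed, \emph{then} attach $d$-ary tree gadgets; do not attach $p$-ary gadgets first and try to thin afterwards.
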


The rest of this paper is organized as follows.
Section~\ref{sect-CM2008} gives a deterministic construction of expander regular graphs with large girth for almost all degrees, which is based on the construction due to Cioab\u{a} and Murty~\cite{CM2008}. Section~\ref{sect-main} proves Theorems~\ref{thm-main1} and \ref{thm-main}.
In Section~\ref{sect-conc}, some concluding remarks are given. 

\section{A construction of expander regular graphs}
\label{sect-CM2008}
This section introduces a construction of expander regular graphs based on the idea due to Cioab\u{a} and Murty~\cite{CM2008}.
The construction is based on the following expander regular graphs constructed by Lubotzky, Philips and Sarnak~\cite{LPS1988}.
\begin{theorem}[\cite{LPS1988}]
\label{thm-LPS1988}
For each odd prime $p$, there exist infinitely many positive integers $n$ and a non-bipartite $(p+1)$-regular graph $R_n$ with $n$ vertices satisfying the following conditions for each $n$.
\begin{enumerate}
    \item $n$ is an even integer;

    \item $\lambda(R_n) \leq 2\sqrt{p}$;

    \item ${\rm girth}(R_n)\geq \frac{2}{3}\log_p(n)$.
\end{enumerate}
\end{theorem}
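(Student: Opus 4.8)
Since this is the Lubotzky--Phillips--Sarnak theorem, my plan is to recall their construction and to flag which ingredients carry the weight. Fix an auxiliary odd prime $q \neq p$ in a residue class for which $p$ is a quadratic residue modulo $q$; infinitely many such $q$ exist by quadratic reciprocity and Dirichlet's theorem. By Jacobi's four--square theorem there are exactly $8(p+1)$ integer solutions of $a_0^2 + a_1^2 + a_2^2 + a_3^2 = p$, and a sign/parity normalization reduces them to a set of $p+1$ Lipschitz quaternions $\alpha = a_0 + a_1 i + a_2 j + a_3 k$ closed under conjugation $\alpha \mapsto \bar{\alpha}$. Via a fixed embedding of $i, j, k$ into $M_2(\mathbb{F}_q)$ (possible since $-1$ and the relevant elements are squares mod $q$), each such norm--$p$ quaternion maps to a matrix whose class in $\mathrm{PGL}_2(\mathbb{F}_q)$ lies in $H := \mathrm{PSL}_2(\mathbb{F}_q)$ exactly because $p$ is a square mod $q$. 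Let $S \subset H$ be the image, a symmetric set of size $p+1$, and set $R_n := \mathrm{Cay}(H, S)$; then $R_n$ is $(p+1)$--regular on $n = |H| = \tfrac{1}{2} q(q^2 - 1)$ vertices.

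For item (1) I note that $q$ odd forces $q^2 - 1 \equiv 0 \pmod 8$, so $n$ is even; connectivity of $R_n$ for large $q$ and non--bipartiteness follow from strong approximation for the quaternion algebra ramified at $\{p, \infty\}$ together with the classical fact that the graph is bipartite precisely when $p$ is a nonresidue mod $q$ --- the case we have excluded. For item (3) I would argue that a cycle of length $\ell$ through the identity is a reduced word $g_1 \cdots g_\ell = 1$ in the generators; lifting to quaternions, $\alpha_1 \cdots \alpha_\ell$ has norm $p^\ell$ and reduces mod $q$ to a scalar matrix. The arithmetic heart is the lemma that such a short product congruent to a scalar must be ``trivial'' (a power of a generator times its conjugate, i.e.\ backtracking), since otherwise $q$ divides the off--diagonal entries of a quaternion of norm $p^\ell$, whence $p^\ell \gtrsim q^2$ and $\ell \geq 2\log_p q - O(1)$; because $n \asymp q^3$ this yields $\mathrm{girth}(R_n) \geq \tfrac{2}{3}\log_p n$ for the infinitely many admissible $n$.

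For item (2), the Ramanujan bound $\lambda(R_n) \leq 2\sqrt{p}$, I would import the deep input rather than reprove it. Up to the representation theory of $H$ and the Jacquet--Langlands correspondence (with strong approximation), the adjacency operator of $R_n$ is the Hecke operator $T_p$ acting on weight--$2$ automorphic forms of level $q$, so every eigenvalue of $A(R_n)$ other than the Perron value $p+1$ equals a Hecke eigenvalue $a_p$ of a weight--$2$ newform on $\Gamma_0(q)$ or a strictly smaller Eisenstein value. The Ramanujan--Petersson bound in weight $2$ --- Eichler's theorem, subsumed by Deligne's proof of the Weil conjectures via Frobenius acting on the $\ell$--adic cohomology of modular curves --- gives $|a_p| \leq 2\sqrt{p}$. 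Non--bipartiteness excludes the eigenvalue $-(p+1)$, so $\lambda(R_n) \leq 2\sqrt{p}$.

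The main obstacle is precisely item (2): there is no elementary path to $2\sqrt{p}$, so the proof genuinely rests on Deligne (or Eichler's weight--$2$ special case). The subtlest \emph{elementary} step is the girth bound, where one must control the integer solutions of $a_0^2 + a_1^2 + a_2^2 + a_3^2 = p^\ell$ under a congruence mod $q$ and verify that a reduced word in $S$ corresponds to a non--backtracking closed walk; the remaining points --- parity of $n$, connectivity, the bipartiteness dichotomy --- are comparatively routine structural facts about $\mathrm{PSL}_2(\mathbb{F}_q)$ and its generating set.
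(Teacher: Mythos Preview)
The paper does not prove this statement at all: it is quoted verbatim from \cite{LPS1988} and used as a black box, so there is no ``paper's own proof'' to compare against. Your proposal is a faithful outline of the original Lubotzky--Phillips--Sarnak argument --- the Cayley graph of $\mathrm{PSL}_2(\mathbb{F}_q)$ on the $p+1$ norm-$p$ quaternion generators, the parity and bipartiteness dichotomy, the girth bound via the norm of a lifted word, and the spectral bound via Jacquet--Langlands and Eichler/Deligne --- and your identification of the Ramanujan--Petersson input as the only non-elementary step is accurate. Nothing beyond the citation is expected here.
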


For our purpose, it is necessary to construct expander regular graphs with more general degrees. 
For each odd prime $p$, take a $(p+1)$-regular graph $R_n$ with $n$ vertices in Theorem~\ref{thm-LPS1988}.
The crucial idea is to delete a {\it $1$-factor}, a $1$-regular spanning subgraph, from $R_n$ to obtain a new $p$-regular graph $R_n'$ with the same vertices.
Note that the existence of a $1$-factor of $R_n$ is confirmed by Theorem~\ref{thm-LPS1988} and the following theorem due to Cioab\u{a}, Gregory and Haemers~\cite{CGH2009}.

\begin{theorem}[\cite{CGH2009}]
\label{thm-CGH2009}
Let $H$ be a $(d+1)$-regular graph. Suppose that $|V(H)|$ is even.
Let $\lambda_3(H)$ denote the third largest eigenvalue of $A(H)$ in absolute value.
Then $H$ has a $1$-factor if
\begin{align}
\label{eq-CGH2009}
     \lambda_3(H) \leq 
     \begin{cases}
    2.85577 & \text{if $d=2$}; \\
    \frac{d-1+\sqrt{(d+1)^2+12}}{2} & \text{if $d$ is odd};\\
    \frac{d-2+\sqrt{(d+2)^2+16}}{2} & \text{if $d$ is even}.
  \end{cases}
\end{align}
\end{theorem}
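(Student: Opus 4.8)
The plan is to prove the contrapositive, combining the Gallai--Edmonds structure theorem with eigenvalue interlacing; throughout, write $\mu_1\ge\mu_2\ge\cdots$ for the eigenvalues of $A(H)$ in the usual (signed) order, and take $H$ to be connected (which suffices for the applications in this paper). Assume $H$ is $(d+1)$-regular with $|V(H)|$ even and without a $1$-factor. Let $D=D(H)$ be the set of vertices missed by some maximum matching of $H$, put $S=N_H(D)\setminus D$, and let $C_1,\dots,C_c$ be the components of $H[D]$. The Gallai--Edmonds theorem tells us that each $C_i$ is factor-critical --- hence of odd order, and $2$-edge-connected when $|C_i|\ge 3$ --- that no edge of $H$ joins $D$ with $V(H)\setminus(D\cup S)$, and that $c-|S|$ equals the deficiency of $H$. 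Since $|V(H)|$ is even this deficiency is a positive even integer, so $c\ge|S|+2$; connectivity of $H$ additionally forces $S\ne\emptyset$ and, writing $b_i:=e(C_i,S)$, gives $b_i\ge 1$ for all $i$ and $b_i=d+1$ whenever $|C_i|=1$. Because every edge leaving $D$ ends in $S$, we obtain the edge budget $\sum_{i=1}^{c}b_i\le(d+1)|S|$.

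Next I would use this budget to isolate a few components that are nearly $(d+1)$-regular. If $|C_i|\ge 3$ then $\delta(C_i)\ge 2$, hence $b_i\le(d-1)|C_i|$; combining this with $b_i=d+1$ on single-vertex components, $b_i\ge 1$ on the others, $\sum b_i\le(d+1)|S|$ and $c\ge|S|+2$ in an elementary count shows that at most $|S|-1$ of the $C_i$ are single vertices, so at least three of the $C_i$ have at least three vertices. The real work is to bound the normalized boundaries $\beta_i:=b_i/|C_i|$ of the three such components with the largest values of $\lambda_1(H[C_i])$; for that one determines the extremal $(d+1)$-regular graphs without a $1$-factor --- those for which these normalized boundaries are forced to be as large as the constraints above allow, typically an $S$ consisting largely of pendant single vertices together with a small number of dense odd components (triangles, or a $(d+1)$-regular graph with one vertex of degree $d$ joined to $S$) --- and reads off the worst case.

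The spectral bound is supplied by interlacing; write $\rho(d)$ for the right-hand side of~\eqref{eq-CGH2009}. For any component $C_i$, the principal submatrix of $A(H)$ indexed by $C_i$ has largest eigenvalue at least the average degree, $(d+1)-\beta_i$; if three components $C_{i_1},C_{i_2},C_{i_3}$ each have largest induced eigenvalue $\ge t$, then, since they span vertex-disjoint subgraphs with no edges among them, Cauchy interlacing for principal submatrices gives $\mu_3\ge t$. To bring the edges running into $S$ into play, one instead takes, for two components $C_i$ and $C_j$, the partition $\{C_i,\,C_j,\,V(H)\setminus(C_i\cup C_j)\}$ and applies interlacing to its $3\times3$ quotient matrix $B$: one has $B\mathbf 1=(d+1)\mathbf 1$, the two ``component rows'' of $B$ have diagonal entries $(d+1)-\beta_i$ and $(d+1)-\beta_j$ and only small off-diagonal mass, and $\mu_3\ge\lambda_{\min}(B)$, so $\mu_3$ is bounded below by an explicit decreasing function of $\beta_i$, $\beta_j$ and the part sizes. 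Evaluating either bound at the extremal configuration of the previous step produces exactly the thresholds in~\eqref{eq-CGH2009}; in particular the radicals $(d+1)^2+12$ and $(d+2)^2+16$ appear as the discriminants of the relevant $2\times2$ or $3\times3$ characteristic polynomials. Finally, $\mu_3>\rho(d)>0$ forces $\mu_1,\mu_2,\mu_3$ all to exceed $\rho(d)$ in absolute value, so the third largest eigenvalue of $A(H)$ in absolute value also exceeds $\rho(d)$, contradicting the hypothesis $\lambda_3(H)\le\rho(d)$.

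The main obstacle is not this mechanism but the bookkeeping needed for the exact constants: one must solve the finite optimization problem of which $(d+1)$-regular graph without a $1$-factor minimizes $\mu_3$, optimizing over the number and sizes of the components $C_i$ and over $|S|$ subject to $\sum b_i\le(d+1)|S|$ and $c\ge|S|+2$, and then compute the quotient-matrix eigenvalue at the optimizer --- this is precisely what distinguishes the odd-$d$ formula from the even-$d$ formula. The cubic case $d=2$ has to be argued separately, because by Petersen's theorem a bridgeless cubic graph already has a $1$-factor, so the extremal cubic graphs without one contain bridges and must be analyzed directly; this produces the isolated numerical value $2.85577$ (a root of an explicit low-degree polynomial) in~\eqref{eq-CGH2009} rather than the generic even-degree expression.
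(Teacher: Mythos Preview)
The paper does not prove Theorem~\ref{thm-CGH2009}: it is quoted from Cioab\u{a}, Gregory and Haemers~\cite{CGH2009} and used as a black box to guarantee that the LPS graphs $R_n$, and each graph obtained from them by successive $1$-factor deletions, still contain a $1$-factor. There is therefore no proof in the present paper against which to compare your attempt.

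For what it is worth, your outline does track the argument of the original source~\cite{CGH2009}: assuming no $1$-factor, Tutte's theorem (the full Gallai--Edmonds decomposition is stronger than needed) yields a set $S$ with at least $|S|+2$ odd components in $H-S$; an edge-count combined with the parity of $(d+1)|C_i|$ produces three odd components $C_{i_1},C_{i_2},C_{i_3}$ each sending few edges to $S$; and Cauchy interlacing applied to the block-diagonal principal submatrix on $C_{i_1}\cup C_{i_2}\cup C_{i_3}$ gives $\mu_3\ge\min_\ell \lambda_1(H[C_{i_\ell}])$. Two caveats about your write-up: the $3\times 3$ quotient-matrix variant you propose for two components plus a ``rest'' block is not the mechanism used in~\cite{CGH2009} and, as stated, delivers no more than the direct principal-submatrix bound; and the sentence ``one determines the extremal $(d+1)$-regular graphs \dots\ and reads off the worst case'' hides the entire substance of the theorem --- the precise thresholds in~\eqref{eq-CGH2009} come from a careful case analysis lower-bounding $\lambda_1(H[C_i])$ for odd components with a prescribed small edge-boundary (split according to the parity of $d+1$, with the cubic case handled separately), and your description of the extremal configurations is too loose to recover those constants.
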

It follows from the construction of $R'$ that
\begin{align}
\label{eq-lambda}
    \lambda(R_n') \leq 2\sqrt{p}+1,
\end{align}
where this follows from the Weyl's inequality (e.g \cite{CM2008}, \cite{HJ1990}).
Also it follows from the construction of $R_n'$ that
\begin{align}
\label{eq-girth}
    {\rm girth}(R_n')\geq \frac{2}{3}\log_p(n)=\frac{2}{3} \cdot \frac{\log(p-1)}{\log(p)} \cdot \log_{p-1}(n).
\end{align}

The following propositions play an important role in the next section.
\begin{proposition}
\label{prop-seedexpander1}
For each integer $d \in [p_{j}, p_{j+1}]$ with $p_{j+1}-p_j<(1/5)\cdot p_j$, there exist infinitely positive integers $m$ and a $(d+1)$-regular graph $H_n$ with $n$ vertices such that
\begin{equation}
\label{eq-proplambda1}
       \lambda(H_n) \leq  \frac{2}{5}d+2\sqrt{d}, 
\end{equation}
\begin{equation}
\label{eq-propgirth1}
     {\rm girth}(H_n)
    \geq \frac{2}{3} \cdot \frac{\log (d)}{\log (p_{j+1})} \cdot \log_{d}(n)
    > \frac{2}{3} \cdot \frac{\log (p_{j})}{\log (p_{j+1})} \cdot \log_{d}(n).
\end{equation}
\end{proposition}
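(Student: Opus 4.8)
The plan is to start from an odd prime $p$ with $d \in [p_j, p_{j+1}]$ and $p \le p_{j+1}$, apply the Lubotzky–Phillips–Sarnak construction (Theorem~\ref{thm-LPS1988}) to get a non-bipartite $(p+1)$-regular graph $R_n$ on an even number $n$ of vertices with $\lambda(R_n) \le 2\sqrt{p}$ and ${\rm girth}(R_n) \ge \tfrac{2}{3}\log_p(n)$, and then repeatedly delete $1$-factors to descend from degree $p$ down to degree $d+1$. Concretely, I would take $p = p_{j+1}$ if $d+1 \le p_{j+1}$ (which holds since $d \le p_{j+1}$), and delete $p_{j+1} - d$ edges at each vertex, i.e.\ remove $p_{j+1} - d$ successive $1$-factors, producing a $(d+1)$-regular graph $H_n$ on the same $n$ vertices.

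The key steps, in order, are as follows. First, I would verify that at each stage of the deletion process a $1$-factor exists, by checking the hypothesis of Theorem~\ref{thm-CGH2009}: if the current graph has degree $e+1$ (with $d+1 \le e+1 \le p_{j+1}$), then by Weyl's inequality its third-largest eigenvalue in absolute value is at most $2\sqrt{p_{j+1}} + (p_{j+1} - e - 1) \le 2\sqrt{p_{j+1}} + (p_{j+1} - d - 1)$, and I need this to be below the threshold $\tfrac{e-1+\sqrt{(e+1)^2+12}}{2}$ (or the even-$e$ analogue). Using $p_{j+1} - p_j < (1/5)p_j$ and $e \ge d \ge p_j$, one bounds $p_{j+1} - d - 1 < (1/5)p_j \le (1/5)e$ and $2\sqrt{p_{j+1}} < 2\sqrt{(6/5)e}$, so the third eigenvalue is at most roughly $(1/5)e + 2\sqrt{(6/5)e}$, which is comfortably below $e - O(1)$ for all $e$ large enough; since there are only finitely many primes $p_j$ with $p_{j+1} - p_j \ge (1/5)p_j$ (and in any case the statement is about a fixed $d$), the inequality holds in the relevant range. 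Second, having deleted $p_{j+1} - d$ factors, Weyl's inequality gives $\lambda(H_n) \le \lambda(R_n) + (p_{j+1} - d) \le 2\sqrt{p_{j+1}} + (p_{j+1} - d)$; then using $p_{j+1} < (6/5)d$ I bound $2\sqrt{p_{j+1}} < 2\sqrt{(6/5)d}$ — hmm, I actually want the cleaner bound $\lambda(H_n) \le \tfrac{2}{5}d + 2\sqrt{d}$, so I would instead write $p_{j+1} - d < (1/5)p_j \le (1/5)d$ and $2\sqrt{p_{j+1}} \le 2\sqrt{d} + (2\sqrt{p_{j+1}} - 2\sqrt{d})$, absorbing the discrepancy $2(\sqrt{p_{j+1}}-\sqrt{d}) = \tfrac{2(p_{j+1}-d)}{\sqrt{p_{j+1}}+\sqrt{d}} < \tfrac{(2/5)d}{2\sqrt{d}} = \tfrac{\sqrt{d}}{5}$ into the $\tfrac{2}{5}d$ term (since $\tfrac{1}{5}d + \tfrac{1}{5}\sqrt{d} < \tfrac{2}{5}d$ for $d \ge 2$), yielding $\lambda(H_n) \le \tfrac{2}{5}d + 2\sqrt{d}$ as claimed. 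Third, since deleting a spanning subgraph cannot destroy cycles, ${\rm girth}(H_n) \ge {\rm girth}(R_n) \ge \tfrac{2}{3}\log_p(n) = \tfrac{2}{3}\log_{p_{j+1}}(n)$, and converting the base via $\log_{p_{j+1}}(n) = \tfrac{\log d}{\log p_{j+1}} \cdot \log_d(n)$ gives exactly the stated girth bound, with the final strict inequality following from $d \ge p_j$.

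I expect the main obstacle to be the first step: certifying that a $1$-factor survives through every intermediate degree, which requires the third-eigenvalue bound coming out of Weyl's inequality to stay under the Cioabă–Gregory–Haemers threshold uniformly, and this is precisely where the arithmetic condition $p_{j+1} - p_j < (1/5)p_j$ is used — it controls how many factors must be deleted relative to the degree, and hence how much the spectrum can drift. The eigenvalue and girth estimates afterward are routine consequences of Weyl's inequality and monotonicity of girth under edge deletion; the only mild care needed there is the algebraic massaging to reach the clean form $\tfrac{2}{5}d + 2\sqrt{d}$ rather than a messier expression, and checking the base-change identity for logarithms.
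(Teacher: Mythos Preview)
Your proposal is correct and follows essentially the same approach as the paper: start from the LPS Ramanujan graph $R_n$ at the prime $p=p_{j+1}$, inductively delete $p_{j+1}-d$ one-factors (using Weyl's inequality to control the spectrum and Theorem~\ref{thm-CGH2009} to guarantee a $1$-factor at each stage), and read off the girth bound from monotonicity under edge deletion together with a change of logarithm base. The only cosmetic difference is that the paper packages the intermediate spectral bound directly as $\lambda(H_n')\le \tfrac{2}{5}d'+2\sqrt{d'}$ at every stage (which makes the Cioab\u{a}--Gregory--Haemers threshold check uniform in $d'$), whereas you arrive at the final $\tfrac{2}{5}d+2\sqrt{d}$ via a slightly different algebraic rearrangement; your remark that the threshold holds ``for $e$ large enough'' should be tightened to an explicit check valid for all $d'\ge p_j$ under the hypothesis $p_{j+1}-p_j<(1/5)p_j$, but this is straightforward.
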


\begin{proof}
Let $p=p_{j+1}$ and take a $(p+1)$-regular graph $R_n$ in Theorem~\ref{thm-LPS1988}.
Let $d \in [p_j, p_{j+1}]$.
It suffices to prove that the deletion of $p_{j+1}-d$ $1$-factors from $R$ generates a $(d+1)$-regular graph $H_n$ satisfying the conditions of the proposition. 
For $d'\in (d, p_{j+1}]$, suppose the existence of $p_{j+1}-d'$ $1$-factors of $R_n$ and let $H_n'$ be the $(d'+1)$-regular graph obtained by deleting $p_{j+1}-d'$ $1$-factors from $R_n$.
Then, by (\ref{eq-lambda}), 
\begin{align*}
    \lambda(H_n') &\leq 2\sqrt{p_{j+1}}+p_{j}-d' \\
    &=2\sqrt{d'+(p_{j+1}-d')}+p_{j+1}-d'\\
    &\leq 2\sqrt{d'}+2(p_{j+1}-p_j)\\
    &\leq 2\sqrt{d'}+ \frac{2}{5}p_j\\
    &\leq 2\sqrt{d'}+\frac{2}{5}d',
\end{align*}
where the third, fourth and fifth inequalities follow from the assumptions of $d'$, $d$ and $p_j$. 
By Theorem~\ref{thm-CGH2009}, $H_n'$ still has a $1$-factor. 
Thus the induction verifies the existence of $p_{j+1}-d$ $1$-factors of $R_n$.
Since (\ref{eq-proplambda1}) follows from the above discussion, it suffices to prove (\ref{eq-propgirth1}), which is directly obtained by (\ref{eq-girth}).
\end{proof}

\begin{remark}
\label{rem-primegap1}
By the main theorem in \cite{N1952}, it holds that $p_{j+1}-p_j<(1/5)\cdot p_j$ whenever $j\geq 10$, where $p_{10}=29$.
\end{remark}

\begin{proposition}
\label{prop-seedexpander}
Let $p$ be an odd prime and $t\geq 2$ an integer with $0<p-t<(\frac{5}{2\sqrt{6}}-1)\sqrt{t}$.
Then for each integer $d \in [t, p]$, there exist infinitely many positive integers $n$ and a $(d+1)$-regular graph $H_n$ with $n$ vertices such that
\begin{equation}
\label{eq-proplambda}
       \lambda(H_n) \leq \frac{5}{\sqrt{6}}\sqrt{d}, 
\end{equation}
\begin{equation}
\label{eq-propgirth}
     {\rm girth}(H_n)
    \geq \frac{2}{3} \cdot \frac{\log d}{\log p} \cdot \log_{d}(n)
    > \frac{2}{3} \cdot \frac{\log t}{\log p} \cdot \log_{d}(n).
\end{equation}
\end{proposition}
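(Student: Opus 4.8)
The plan is to mirror the proof of Proposition~\ref{prop-seedexpander1} verbatim, only tracking the constants more carefully. Fix the odd prime $p$, take a $(p+1)$-regular graph $R_n$ as in Theorem~\ref{thm-LPS1988}, and for each integer $d\in[t,p]$ show that deleting $p-d$ edge-disjoint $1$-factors of $R_n$ yields a $(d+1)$-regular graph $H_n$ with the required properties. The existence of these $1$-factors follows by downward induction on $d'$: for $d'\in(d,p]$, assuming $R_n$ has $p-d'$ edge-disjoint $1$-factors, delete them to obtain a $(d'+1)$-regular graph $H'_n$; iterating the Weyl bound~(\ref{eq-lambda}) gives $\lambda_3(H'_n)\le\lambda(H'_n)\le 2\sqrt p+(p-d')$, and since $d'\ge d+1\ge t+1\ge 3$, Theorem~\ref{thm-CGH2009} will produce one more $1$-factor of $H'_n$ — hence $p-(d'-1)$ edge-disjoint $1$-factors of $R_n$ — as soon as we know that $2\sqrt p+(p-d')$ is at most the corresponding bound in (\ref{eq-CGH2009}) for $d'$. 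The base case $d'=p$ is trivial.

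The computational core, used both in the inductive step and for the final graph, is the following. For any integer $d'\ge 1$ and $x:=p-d'\ge 0$ one has $\sqrt{d'+x}\le\sqrt{d'}+x/2$, whence
\begin{equation*}
2\sqrt p+(p-d')=2\sqrt{d'+x}+x\le 2\sqrt{d'}+2x .
\end{equation*}
Since $t\le d'$ in every case arising above, $x=p-d'\le p-t<(\tfrac{5}{2\sqrt 6}-1)\sqrt t\le(\tfrac{5}{2\sqrt 6}-1)\sqrt{d'}$, so the right-hand side is at most $2\sqrt{d'}+(\tfrac{5}{\sqrt 6}-2)\sqrt{d'}=\tfrac{5}{\sqrt 6}\sqrt{d'}$. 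Taking $d'=d$ this is precisely (\ref{eq-proplambda}) (and it also covers the trivial case $d=p$, $H_n=R_n$). For the inductive step it then remains to verify that $\tfrac{5}{\sqrt 6}\sqrt{d'}$ is at most the corresponding bound in (\ref{eq-CGH2009}) for every $d'\ge 3$: for $d'\ge 5$ that bound is at least $d'$ (discard the $+12$ and $+16$ terms under the square roots) while $\tfrac{5}{\sqrt 6}\sqrt{d'}\le d'$ because $d'\ge 25/6$; and for $d'\in\{3,4\}$ one compares $\tfrac{5}{\sqrt 6}\sqrt{d'}$ directly with $1+\sqrt 7$ and $1+\sqrt{13}$. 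This closes the induction and produces $H_n$ with $\lambda(H_n)\le\tfrac{5}{\sqrt 6}\sqrt d$.

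The girth bound (\ref{eq-propgirth}) is then free: $H_n$ is a spanning subgraph of $R_n$, so deleting edges cannot shorten any cycle and ${\rm girth}(H_n)\ge{\rm girth}(R_n)\ge\tfrac23\log_p(n)$, after which the displayed inequalities are the same elementary rewriting as in (\ref{eq-girth}) using $t\le d\le p$; the infinitude of admissible $n$ is inherited directly from Theorem~\ref{thm-LPS1988}. I expect the only delicate point to be the small-degree case analysis against the thresholds in (\ref{eq-CGH2009}) for $d'\in\{3,4\}$, together with the observation that the induction never descends to $d'=2$ — so the exceptional value $2.85577$, for which $\tfrac{5}{\sqrt 6}\sqrt 2\approx 2.89$ would \emph{fail}, is never invoked. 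This is exactly where the hypotheses $t\ge 2$ and $p-t<(\tfrac{5}{2\sqrt 6}-1)\sqrt t$ enter.
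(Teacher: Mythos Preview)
Your proof is correct and follows exactly the paper's approach: start from the LPS graph $R_n$, delete $p-d$ edge-disjoint $1$-factors by downward induction using Theorem~\ref{thm-CGH2009}, and bound $\lambda(H_n)$ via the same chain $2\sqrt p+(p-d)\le 2\sqrt d+2(p-t)\le\tfrac{5}{\sqrt 6}\sqrt d$, with the girth inherited from $R_n$. You supply more detail than the paper does on verifying that $\tfrac{5}{\sqrt 6}\sqrt{d'}$ actually meets the thresholds in~(\ref{eq-CGH2009}) (including the explicit checks at $d'\in\{3,4\}$ and the observation that the induction never reaches $d'=2$), which the paper leaves implicit by pointing to the proof of Proposition~\ref{prop-seedexpander1}.
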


\begin{proof}
As in Proposition~\ref{prop-seedexpander1}, take a $(p+1)$-regular graph $R_n$ in Theorem~\ref{thm-LPS1988}.
The existence of $p-d$ $1$-factors of $R_n$ follows from the same discussion in the proof of Proposition~\ref{prop-seedexpander1}.
Since (\ref{eq-propgirth}) directly follows from (\ref{eq-girth}),
it suffices to prove (\ref{eq-proplambda}), which is verified by (\ref{eq-lambda}) and since it holds that
\begin{align*}
    \lambda(H_n) &\leq 2\sqrt{p}+p-d \\
    &=2\sqrt{d+(p-d)}+p-d\\
    &\leq 2\sqrt{d}+2(p-t)\\
    &\leq 2\sqrt{d}+2\cdot \Bigl(\frac{5}{2\sqrt{6}}-1 \Bigr)\sqrt{d}\\
    &\leq 2\sqrt{d}+\Bigl(\frac{5}{\sqrt{6}}-2 \Bigr)\sqrt{d}
    =\frac{5}{\sqrt{6}}\sqrt{d},
\end{align*}
where the third, fourth and fifth inequalities follow from the assumptions of $d$, $p$ and $t$. 
\end{proof}

\begin{remark}
\label{rem-primegap}
We remark that Proposition~\ref{prop-seedexpander} provides $(d+1)$-regular graphs for almost all positive integers $d$.
Notice that if $p_{j+1}-p_{j}<(\frac{5}{2\sqrt{6}}-1)\sqrt{p_j}$, Proposition~\ref{prop-seedexpander} generates $(d+1)$-regular graphs for all $d \in [p_j, p_{j+1}]$.
In number theory, it is known (\cite{H2019}) that the number of primes $p \leq x$ such that $p=p_j$ with $p_{j+1}-p_{j}>\sqrt{p_{j}}$ is at most $C_{\delta}x^{5/6+\delta}$ as $x \to \infty$ for any $\delta>0$, where $C_{\delta}>0$ depends only $\delta$. 
This implies the desired claim in this remark.
\end{remark}

\section{Proofs of Theorems~\ref{thm-main1} and \ref{thm-main}}
\label{sect-main}
The main task here is to construct $(d+1)$-regular graphs with the conditions of Theorems~\ref{thm-main1} and \ref{thm-main} by using Propositions~\ref{prop-seedexpander1} and \ref{prop-seedexpander}, respectively.

The construction follows from Alon, Ganguly and Srivastava~\cite{AGS2020}.

\begin{construction}
\label{const-AGS2020}
Fix $\alpha \in (0, 1/6)$.
Suppose we have a $(d+1)$-regular graph $H=H_n$ with $n$ vertices provided in Proposition~\ref{prop-seedexpander1} or \ref{prop-seedexpander}.
Let $r=\lfloor \alpha \log_d(n) \rfloor$.
Now construct a $(d+1)$-regular graph $G$ from $H$ by using a {\it $d$-ary tree of depth $r$}, a finite tree such that all vertices except for leafs have degree $d+1$ and every leaf is at distance $r$ from the root. 
First choose a vertex $u \in V(H)$ and take all vertices of $H$ which are at distance at most $r$. 
By the definition of $r$, these vertices induce a $d$-ary tree $T_1$ of depth $r$ with root $u$. 
Let $L_1$ denote the set of leafs of $T_1$ and $V_1=V(T_1) \setminus L_1$.
Take a set of $|L_1|$ vertices, denoted by $L_2$, which are at distance $r+1$ such that there exists a perfect matching $M$ between $L_1$ and $L_2$. 
Then remove the matching $M$ from $H$, and add new $d$-ary tree $T_2$ of depth $r$ to $H$ so that the set of leafs of $T_2$ coincides $L_1$ and other vertices are taken as new vertices outside of $V(H)$.
Here suppose that the graph $G_{1,2}$ on $V(T_1) \cup V(T_2)$ satisfies that
\begin{equation}
\label{eq-pairingtree}
    {\rm girth}(G_{1, 2})\geq 2\log_{2d-1}((d+1)d^{r-1}),
\end{equation}
which is possible by Lemma 2.1 in \cite{AGS2020}.
Finally add another $d$-ary tree $T_3$ of depth $r$ to $H$ so that the set of leafs of $T_3$ coincides $L_2$ and other vertices are taken as new vertices outside of $V(H) \cup V(T_2)$.
The resulting $(d+1)$-regular graph is the desired graph $G$. 
\end{construction}

Note that it holds by the definition of $G$ that
\begin{equation}
\label{eq-vertices}
   m:=|V(G)|=n+2\Bigl\{1+\sum_{1\leq l \leq r-1}(d+1)d^{l-1} \Bigr\}
=n+O(n^{\alpha}).
\end{equation}

To prove Theorems~\ref{thm-main1} and \ref{thm-main}, we have to show that the above construction provides $(d+1)$-regular graphs satisfying the conditions of the theorems. 

First we evaluate the girth of $G$.
\begin{lemma}
\label{lem-girth}
Suppose that $d \in [p_j, p_{j+1}]$ with $\frac{\log (p_{j})}{\log (p_{j+1})}>6\alpha$.
Then, if $n$ is sufficiently large, it holds that
\begin{equation}
\label{eq-lemgirth}
    {\rm girth}(G)\geq 2\log_{2d-1}((d+1)d^{r-1})=2\alpha \log_{2d-1}\Bigl(\Bigl(1+\frac{1}{d} \Bigr)n \Bigr).
\end{equation}
In particular, ${\rm girth}(G)\geq 2\alpha\log_{2d-1}(m)$ if $m$ is sufficiently large.
\end{lemma}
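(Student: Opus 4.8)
The plan is to trace every potential short cycle in $G$ back to one of three structural sources and bound its length from below in each case. Recall that $G$ is built from the seed expander $H = H_n$ by carving out a $d$-ary tree $T_1$ of depth $r = \lfloor \alpha \log_d(n)\rfloor$ rooted at some $u \in V(H)$, removing a perfect matching $M$ between the leaf set $L_1$ and a second leaf set $L_2 \subset V(H)$, and attaching two fresh $d$-ary trees $T_2$ (with leaves glued to $L_1$) and $T_3$ (with leaves glued to $L_2$). Any cycle $C$ in $G$ either (a) lies entirely inside $H' := H - M$ together with the tree parts still sitting inside $H$, (b) lies inside the auxiliary graph $G_{1,2}$ on $V(T_1)\cup V(T_2)$, or (c) uses edges of $T_3$. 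The trees $T_2, T_3$ are themselves acyclic and are attached only along their leaves, so any cycle passing through $T_2$ or $T_3$ must enter and leave through leaf vertices; I would first record this reduction carefully so that the three cases are exhaustive.

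The first step is to bound girth contributions from $H$. By Proposition~\ref{prop-seedexpander1} (for Theorem~\ref{thm-main1}) or Proposition~\ref{prop-seedexpander} (for Theorem~\ref{thm-main}), one has
\begin{equation*}
{\rm girth}(H) \geq \frac{2}{3}\cdot\frac{\log d}{\log p_{j+1}}\cdot\log_d(n) > \frac{2}{3}\cdot\frac{\log p_j}{\log p_{j+1}}\cdot\log_d(n).
\end{equation*}
Removing the matching $M$ and removing the tree $T_1$ can only destroy cycles, never create shorter ones, so ${\rm girth}(H') \geq {\rm girth}(H)$. Now compare this with the target $2\log_{2d-1}((d+1)d^{r-1}) = 2\alpha\log_{2d-1}((1+1/d)n)$. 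For large $n$ the target is $(2\alpha + o(1))\log_{2d-1}(n)$, whereas the bound from $H$ is at least $\tfrac{2}{3}\cdot\tfrac{\log p_j}{\log p_{j+1}}\cdot\tfrac{\log(2d-1)}{\log d}\cdot\log_{2d-1}(n) \geq \tfrac{2}{3}\cdot\tfrac{\log p_j}{\log p_{j+1}}\cdot\log_{2d-1}(n)$; the hypothesis $\tfrac{\log p_j}{\log p_{j+1}} > 6\alpha$ makes the $H$-bound exceed $4\alpha\log_{2d-1}(n)$, hence exceed the target once $n$ is large. So any cycle avoiding $T_2$ and $T_3$ already has length at least $2\log_{2d-1}((d+1)d^{r-1})$.

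For cycles interacting with the new trees, the key observation is that $T_3$ is glued to $H$ only through $L_2$, and in $H' = H - M$ the vertices of $L_2$ are no longer matched to $L_1$; combined with the tree structure of $T_1$, a cycle through $T_3$ must either stay within $T_3 \cup H'$, where the shortest such closed walk has length at least $2r + {\rm girth}(H')$ (go down $T_3$, traverse $H'$, come back), or route through $T_2$ as well, in which case it must traverse $G_{1,2}$ and picks up length at least ${\rm girth}(G_{1,2}) \geq 2\log_{2d-1}((d+1)d^{r-1})$ by \eqref{eq-pairingtree}. In every subcase the bound \eqref{eq-lemgirth} follows, the first equality being a direct computation from $d^{r} \leq n < d^{r+1}$. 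The final sentence then uses \eqref{eq-vertices}, namely $m = n + O(n^\alpha) = (1+o(1))n$, so $2\alpha\log_{2d-1}((1+1/d)n) \geq 2\alpha\log_{2d-1}(m)$ for large $m$. I expect the main obstacle to be the bookkeeping in the case analysis for cycles threading $T_2$ and $T_3$ — making precise that such a cycle is genuinely forced through $G_{1,2}$ (rather than finding some shortcut created by the surgery) — which is exactly the point where Lemma 2.1 and the inequality \eqref{eq-pairingtree} from \cite{AGS2020} are invoked; the numerical comparisons, by contrast, are routine given the standing hypothesis on $\log p_j / \log p_{j+1}$.
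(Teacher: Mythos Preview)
Your overall strategy---case analysis on cycles, with the ingredients being ${\rm girth}(H)$ for cycles confined to $H'$ and \eqref{eq-pairingtree} for cycles passing through the attached trees---is the same as the paper's (which in turn defers to \cite{AGS2020}). Your numerical comparison for cycles in $H'$ is correct and is precisely the paper's key step: ${\rm girth}(H) > \frac{2}{3}\cdot\frac{\log p_j}{\log p_{j+1}}\cdot\log_d(n) > 4\alpha\log_d(n) \geq 4r$, which exceeds the target $2\log_{2d-1}((d+1)d^{r-1}) < 2r$.

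There is, however, a genuine gap in your treatment of cycles through $T_3$. You claim that a cycle in $T_3 \cup H'$ has length at least $2r + {\rm girth}(H')$, but this does not follow. Such a cycle enters $T_3$ at some leaf $a \in L_2$ and exits at another leaf $b \in L_2$; the $T_3$-segment has length $2s$ where $s$ is the height of the lowest common ancestor of $a,b$ in $T_3$, and this can be as small as $2$. The remainder of the cycle is a \emph{path} from $b$ to $a$ in $H - M$, not a closed walk, so its length is bounded below by ${\rm dist}_{H-M}(a,b)$, not by ${\rm girth}(H')$. What the argument actually needs---and what the paper isolates explicitly---is the distance condition: from ${\rm girth}(H) > 4r$ one deduces that any two distinct vertices of $L_1$ (respectively of $L_2$) are at distance greater than $2r$ in $H \setminus V_1$, since a shorter such path combined with the tree path through $T_1$ would yield a cycle of length at most $4r$ in $H$. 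This is the missing lemma that makes the segment-by-segment analysis go through. The same issue affects your subcase where the cycle uses both $T_2$ and $T_3$: such a cycle is not contained in $G_{1,2}$, so ${\rm girth}(G_{1,2})$ does not bound its length directly; one again needs the distance conditions on $L_1$ and $L_2$ to control the portions lying in $H \setminus V_1$.
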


\begin{proof}
By the definition of $G$, in the graph $H \setminus V_1$, the distance between two distinct vertices of $L_1$ is greater than $2r$ since ${\rm girth}(H)>4r$ which follows from the assumption of $\alpha$ and $r$.
The same claim also holds for each pair of two distinct vertices of $L_2$ in the graph $H \setminus V_1$.
The lemma is proved by (\ref{eq-propgirth}), (\ref{eq-pairingtree}) and since it holds that $\frac{2}{3} \cdot \frac{\log (p_j)}{\log (p_{j+1})} \cdot \log_d(n)>4r$, which follows from the assumption that $\frac{\log (p_{j})}{\log (p_{j+1})}>6\alpha$. 
The last claim is directly obtained by (\ref{eq-vertices}).
\end{proof}

Secondly, the following lemma proves the claim about localized eigenvectors.

\begin{lemma}
\label{lem-local}
For each $\varepsilon \in (0, 1)$, Construction~\ref{const-AGS2020} provides $(d+1)$-regular graphs $G$ with $m$ vertices such that
there exist $\lfloor \alpha \log_d(m) \rfloor$ eigenvalues $\lambda_1$, $\lambda_2$, $\ldots$, $\lambda_{\lfloor \alpha \log_d(m) \rfloor}$ of $A(G)$ such that each has a $(k, \varepsilon)$-localized eigenvector with $k=O(m^\alpha)$ as $m \to \infty$.
\end{lemma}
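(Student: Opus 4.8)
The plan is to produce all of the required localized eigenvectors at once from a single nontrivial automorphism of $G$, built from the two depth-$r$ $d$-ary trees $T_1$ and $T_2$ that share the leaf set $L_1$. For this one carries out Construction~\ref{const-AGS2020} with $T_2$ chosen so that its leaf-partition matches that of $T_1$; then there is a rooted-tree isomorphism $\phi:T_1\to T_2$ that is the identity on $L_1$. Put $V_1=V(T_1)\setminus L_1\subseteq V(H)$ and $V_2=V(T_2)\setminus L_1$ (all new vertices), and let $\sigma$ be the involution of $V(G)$ acting as $v\mapsto\phi(v)$ on $V_1\cup V_2$ and as the identity elsewhere. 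One then checks $\sigma\in\Aut(G)$ edge type by edge type: edges inside $V_1$, edges inside $V_2$, and edges joining $L_1$ to $V_1$ or to $V_2$ are matched up correctly because $\phi$ is a tree isomorphism fixing $L_1$ pointwise, while every other edge of $G$ --- the $d-1$ surviving edges from each $\ell\in L_1$ into $V(H)\setminus V(T_1)$, all edges of $T_3$, and all edges of $H$ disjoint from $V(T_1)$ --- has both endpoints in the $\sigma$-fixed set $V(G)\setminus(V_1\cup V_2)$ and is therefore preserved. The two inputs making this work are exactly what the hypotheses supply: ${\rm girth}(H)>4r$ (as in the proof of Lemma~\ref{lem-girth}) forces the radius-$r$ ball around $u$ to be an induced $d$-ary tree $T_1$, and every internal vertex of $T_1$ has all of its $H$-neighbours inside $V(T_1)$, so $V_1$ and $V_2$ communicate with the rest of $G$ only through $L_1$.

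Next, consider the $(-1)$-eigenspace $W=\{x\in\mathbb{R}^{V(G)}:P_\sigma x=-x\}$ of the permutation matrix $P_\sigma$. Since $A(G)$ commutes with $P_\sigma$, the subspace $W$ is $A(G)$-invariant. Every $x\in W$ vanishes at each fixed point of $\sigma$, in particular on $L_1$ and on all of $V(G)\setminus(V_1\cup V_2)$, and is determined by $y:=x|_{V_1}$ via $x|_{V_2}=-y\circ\phi^{-1}$; hence $\dim W=|V_1|$ and ${\rm supp}(x)\subseteq V_1\cup V_2$ for every $x\in W$. Evaluating $A(G)x$ on $V_1$: a vertex $v\in V_1$ has all of its $G$-neighbours in $V_1\cup L_1$ and $x$ vanishes on $L_1$, so $(A(G)x)|_{V_1}=A(G[V_1])\,y$, where $G[V_1]=T_1[V_1]$ is precisely the $d$-ary tree $\mathcal{T}$ of depth $r-1$. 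Thus $A(G)|_W$ is similar to $A(\mathcal{T})$, and every eigenvector of $A(G)$ lying in $W$ is supported on $V_1\cup V_2$, a set of size $2|V_1|=O(d^{\,r-1})=O(n^{\alpha})=O(m^{\alpha})$ by (\ref{eq-vertices}). Since the eigenvectors are normalized in $l_2$ norm, any such eigenvector is $(k,\varepsilon)$-localized with $k=O(m^{\alpha})$ for every $\varepsilon\in(0,1)$, with witnessing set $S=V_1\cup V_2$.

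It remains to check that $W$ supplies eigenvectors for at least $\lfloor\alpha\log_d(m)\rfloor$ distinct eigenvalues. As $\mathcal{T}$ is connected of diameter $2(r-1)$, it has at least $2(r-1)+1=2r-1$ distinct eigenvalues; since $m=n+O(n^{\alpha})$ gives $\lfloor\alpha\log_d(m)\rfloor\leq r+1$, for $n$ (hence $r$) large this does not exceed $2r-1$. Picking $\lfloor\alpha\log_d(m)\rfloor$ distinct eigenvalues of $A(\mathcal{T})$, equivalently of $A(G)|_W$, together with one unit eigenvector in $W$ for each, yields eigenvalues $\lambda_1,\dots,\lambda_{\lfloor\alpha\log_d(m)\rfloor}$ of $A(G)$ with $(O(m^{\alpha}),\varepsilon)$-localized eigenvectors, as required.

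I expect the main obstacle to be the automorphism step: one must arrange Construction~\ref{const-AGS2020} so that $T_1$ and $T_2$ agree leaf for leaf, and then verify carefully that $V_1\cup V_2$ meets the rest of $G$ only along the $\sigma$-fixed set $L_1$ --- this is precisely where the large girth of $H$ is used. A secondary subtlety is the observation that vectors of $W$ vanish on $L_1$, without which $A(G)|_W$ would not collapse cleanly onto the tree $\mathcal{T}$ and the distinct-eigenvalue count would be harder to control.
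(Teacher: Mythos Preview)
Your automorphism $\sigma$ cannot exist in any graph produced by Construction~\ref{const-AGS2020}. If $\phi:T_1\to T_2$ is a rooted-tree isomorphism that is the identity on $L_1$, then any two leaves $\ell,\ell'$ sharing a parent $p_1$ in $T_1$ must also share the parent $\phi(p_1)$ in $T_2$; this produces the $4$-cycle $\ell\,p_1\,\ell'\,\phi(p_1)\,\ell$ inside $G_{1,2}$. Since every depth-$(r-1)$ vertex of $T_1$ has $d\ge 2$ children, such sibling pairs always exist, and hence your choice of $T_2$ forces ${\rm girth}(G_{1,2})=4$, directly contradicting \eqref{eq-pairingtree}. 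In other words, ``arranging $T_2$ so that its leaf-partition matches that of $T_1$'' is precisely what the girth requirement in the construction forbids, so the involution on which your whole argument rests is unavailable. (Dropping \eqref{eq-pairingtree} is not an option either: it is what feeds into Lemma~\ref{lem-girth}, and without it $G$ itself would have girth~$4$.)

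The paper's proof simply invokes Lemmas~3.1--3.3 of \cite{GS2019} and Lemma~3.5 of \cite{AGS2020}; the underlying mechanism avoids any pointwise leaf symmetry. The eigenvectors there are built from \emph{spherical} (radial) eigenfunctions of the depth-$(r-1)$ tree $\mathcal{T}$: a function on $V_1$ depending only on the distance to the root, transported to $V_2$ with a sign flip and extended by $0$ elsewhere. Because such a function is constant on the set of depth-$(r-1)$ vertices, the cancellation $(A(G)f)(\ell)=f(p_1(\ell))+f(p_2(\ell))=0$ for $\ell\in L_1$ holds regardless of how $T_2$ partitions $L_1$, so no compatibility between the two leaf-partitions is needed. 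The radial part of $A(\mathcal{T})$ is an $r\times r$ tridiagonal (Jacobi) matrix with $r$ distinct eigenvalues, which supplies the required $\lfloor\alpha\log_d(m)\rfloor$ localized eigenvectors. Your reduction of $A(G)|_W$ to $A(\mathcal{T})$ is morally the right picture, but it should be carried out on the spherical subspace rather than on the $(-1)$-eigenspace of a global automorphism that the construction does not admit.
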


\begin{proof}
The lemma follows from Lemmas 3.1, 3.2 and 3.3 in \cite{GS2019} and Lemma 3.5 in \cite{AGS2020}.
\end{proof}

Now we are ready to prove Theorem~\ref{thm-main1}.
\begin{proof}[Proof of Theorem~\ref{thm-main1}]
The proof is completed by applying Construction~\ref{const-AGS2020} to a $(d+1)$-regular graph $H_n$ obtained from Proposition~\ref{prop-seedexpander1}, and using Lemmas~\ref{lem-girth} and \ref{lem-local}.
The last claim follows from Remark~\ref{rem-primegap1}.
\end{proof}

The remained task is to prove Theorem~\ref{thm-main}.
Now choose parameters $p$, $t$ and $d$ satisfying the assumption of Proposition~\ref{prop-seedexpander}.
Let $H_n$ be a $(d+1)$-regular graph with $n$ vertices in Proposition~\ref{prop-seedexpander} and $G$ a $(d+1)$-regular graph with $m$ vertices obtained from $H_n$ by Construction~\ref{const-AGS2020}.

To prove Theorem~\ref{thm-main}, we have to evaluate $\lambda(G)$.
\begin{lemma}
\label{lem-lambda}
For every fixed $\beta>0$, if $m$ is sufficiently large, then it holds that
\begin{equation}
\label{eq-lemlambda}
    \lambda(G)\leq \Bigl(\frac{3d-1}{\sqrt{d(2d-1)}} + \beta \Bigr)\sqrt{d}.
\end{equation}
In particular, $\lambda(G)\leq \frac{3}{\sqrt{2}}\sqrt{d}$ if $m$ is sufficiently large.
\end{lemma}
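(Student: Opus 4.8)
The plan is to bound $\lambda(G)$ by comparing $G$ with the seed graph $H = H_n$ whose spectral gap is already controlled by Proposition~\ref{prop-seedexpander} (namely $\lambda(H) \leq \frac{5}{\sqrt 6}\sqrt d$), and then accounting for the local modification performed in Construction~\ref{const-AGS2020}. Concretely, $G$ is obtained from $H$ by deleting a matching $M$ of size $|L_1| = d^r$ and attaching two $d$-ary trees $T_2, T_3$ of depth $r$ on fresh vertices (except their leaf sets, which are $L_1$ and $L_2$). Since $r = \lfloor \alpha\log_d n\rfloor$, we have $d^r = O(n^\alpha) = o(n)$, so the perturbation touches only a vanishing fraction of the vertex set. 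The key is a standard interlacing/decomposition argument: write $A(G)$ as a block structure with one block essentially $A(H)$ perturbed by a rank-$O(n^\alpha)$ matrix (the deleted matching) plus the adjacency matrices of the two trees together with the ``gluing'' edges, and then estimate the non-trivial eigenvalues using a test-function / Rayleigh quotient bound.

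First I would recall that a $d$-ary tree of depth $r$, viewed as a finite rooted tree where internal vertices have degree $d+1$, has spectral radius at most $2\sqrt d$; more precisely, any finite tree with maximum degree $d+1$ has all eigenvalues in $[-2\sqrt d, 2\sqrt d]$, and the relevant quantity for the attached pieces $T_2, T_3$ (which are subtrees of the infinite $(d+1)$-regular tree) is bounded by $2\sqrt d$ as well. The extra contribution to $\lambda(G)$ comes from the interface: each leaf in $L_1$ has its former matching-edge in $H$ removed and gains one edge into $T_2$; and vertices of $T_2$ adjacent to $L_2$ connect to $T_3$. One would set up the eigenvalue equation $A(G)\mathbf v = \lambda \mathbf v$, restrict attention to the tree parts, and solve the recursion along the tree layers, reducing the problem to controlling the ``return'' coefficient at the interface. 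This is exactly the computation carried out in \cite{AGS2020}; the constant $\frac{3d-1}{\sqrt{d(2d-1)}}\sqrt d$ arises from combining the seed bound (which contributes roughly $2\sqrt d$ after the matching deletion is absorbed via Weyl's inequality, using that deleting one $1$-factor shifts eigenvalues by at most $1$, hence the $+1$ in \eqref{eq-lambda}) with the tree contribution $2\sqrt{d-1} \leq 2\sqrt d$ and a cross term of order $1/\sqrt d$, organized so that $2\sqrt d + \frac{d-1}{\sqrt{d(2d-1)}}\sqrt d = \frac{3d-1}{\sqrt{d(2d-1)}}\sqrt d$.

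Here is the order of steps I would carry out. Step one: decompose $V(G) = V(H\setminus M) \sqcup (V(T_2)\setminus L_1) \sqcup (V(T_3)\setminus L_2)$ and write $A(G)$ in the corresponding block form; note $\lambda(H\setminus M) \leq \lambda(H) + 1 \leq \frac{5}{\sqrt 6}\sqrt d + 1$ by Weyl's inequality applied to the rank-at-most-$d^r$ (in fact, a single $1$-factor deletion analysed edge-by-edge, or directly the $\|M\|_{op}\le 1$ bound) perturbation. Step two: for an arbitrary eigenvector $\mathbf v$ of $A(G)$ orthogonal to the Perron vector with eigenvalue $\mu$ satisfying $|\mu| > 2\sqrt d$, use the layered structure of the trees $T_2, T_3$ to express the values of $\mathbf v$ on each tree in terms of its values on the leaf sets $L_1, L_2$, obtaining a ``Schur complement'' type equation supported on $L_1 \cup L_2 \cup (\text{first tree layers})$. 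Step three: bound the resulting effective operator's norm by $2\sqrt d$ plus the interface correction, using that along a depth-$r$ tree the amplification factor of the recursion, $\mu/(\mu - 2\sqrt d/\mu\cdot(\dots))$-type expression, stays bounded; crucially $r\to\infty$ so boundary terms from the deepest layer of the trees decay. Step four: combine Steps one--three via a perturbation estimate — any eigenvalue of $A(G)$ either comes ``mostly from $H$'' (hence at most $\frac{5}{\sqrt 6}\sqrt d + 1 + o(\sqrt d)$, which one checks is $\le \frac{3}{\sqrt 2}\sqrt d$ for all $d \geq 2$, or at least for $d$ in the allowed range) or ``mostly from the trees'' (hence at most $2\sqrt d + o(\sqrt d) \le \frac{3}{\sqrt 2}\sqrt d$), with the genuinely new contribution being the $\frac{3d-1}{\sqrt{d(2d-1)}}\sqrt d + \beta\sqrt d$ bound; finally let $n$ (equivalently $m$, since $m = n + O(n^\alpha)$ by \eqref{eq-vertices}) be large so the error terms are absorbed into $\beta$, and observe $\frac{3d-1}{\sqrt{d(2d-1)}} \le \frac{3}{\sqrt 2}$ with room to spare for every $d\ge 2$, giving the ``in particular'' clause once $\beta$ is chosen small enough.

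The main obstacle is Step three: controlling the interface term, i.e.\ showing that attaching the two trees and deleting the matching does not blow up the second eigenvalue beyond $2\sqrt d$ by more than the stated $O(1/\sqrt d)\cdot\sqrt d$ amount. The delicate point is that the tree-recursion for the eigenvector can in principle be resonant when $|\mu|$ is close to $2\sqrt d$, so one must handle the regime $|\mu| \in (2\sqrt d, \frac{3}{\sqrt 2}\sqrt d]$ carefully, showing the recursion is still contractive enough over $r$ levels and that the boundary contribution at the leaves of $T_2, T_3$ — where they would connect back into $H$ or to each other — is either zero (for the truly pendant parts) or controlled by the seed graph's expansion. Since \cite{AGS2020} performs precisely this analysis for odd primes $d$ and our seed graphs from Proposition~\ref{prop-seedexpander} satisfy the same qualitative hypotheses (non-bipartite, $\lambda(H) = O(\sqrt d)$, girth $\gg r$) with only the constant $2\sqrt p$ replaced by $\frac{5}{\sqrt 6}\sqrt d \le 2.05\sqrt d$, the cleanest route is to invoke their Lemma (the spectral bound in \cite{AGS2020}) verbatim, checking that the only place the seed bound enters is additively and that $\frac{5}{\sqrt 6}\sqrt d + 1 + 2\sqrt d + o(\sqrt d)$ together with the tree correction stays below $\frac{3}{\sqrt 2}\sqrt d$; the arithmetic $\frac{5}{\sqrt 6} < \frac{3}{\sqrt 2}$ (i.e.\ $\frac{5}{\sqrt 6} \approx 2.04 < 2.12 \approx \frac{3}{\sqrt 2}$) is what makes the substitution go through.
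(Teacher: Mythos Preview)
Your proposal misidentifies the logical structure of the argument and, as a result, the arithmetic you rely on does not close. The paper's proof is a short contradiction argument built on two black-box lemmas from \cite{AGS2020} (stated here as Lemmas~\ref{lem-AGS2020-1} and \ref{lem-AGS2020-2}): Lemma~\ref{lem-AGS2020-1} gives, for any non-trivial eigenpair $(\mu,\mathbf g)$,
\[
|\mu|\le \lambda(H)+(\sqrt d+1)\sum_{u\in L_1\cup L_2} g_u^2 + \frac{2(d+1)|L_1|}{n},
\]
and Lemma~\ref{lem-AGS2020-2} says that if $|\mu|\ge (b+\beta)\sqrt d$ with $b=b(d)=\frac{3d-1}{\sqrt{d(2d-1)}}$, then the leaf mass $\sum_{L_1\cup L_2} g_u^2\to 0$. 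Plugging in the seed bound $\lambda(H)\le \frac{5}{\sqrt 6}\sqrt d$ from Proposition~\ref{prop-seedexpander} and $|L_1|=O(n^\alpha)$ yields $|\mu|\le \frac{5}{\sqrt 6}\sqrt d+o(1)$. Since $\frac{5}{\sqrt 6}=b(2)\le b(d)<b(d)+\beta$ for all $d\ge 2$, this contradicts the assumption for large $m$. That is the entire proof.

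The crucial point you miss is that the constant $b(d)$ is \emph{not} obtained by adding a seed contribution and a tree contribution. It is a \emph{threshold}, coming purely from the tree recursion in \cite{AGS2020}, above which the eigenvector must have vanishing mass on $L_1\cup L_2$; the seed bound $\lambda(H)$ then enters \emph{alone} on the right-hand side of Lemma~\ref{lem-AGS2020-1}. Your claimed identity $2\sqrt d+\frac{d-1}{\sqrt{d(2d-1)}}\sqrt d=\frac{3d-1}{\sqrt{d(2d-1)}}\sqrt d$ is false for every $d\ge 2$ (it would force $d^2=d$), and your final numerical check ``$\frac{5}{\sqrt 6}\sqrt d+1+2\sqrt d+o(\sqrt d)\le \frac{3}{\sqrt 2}\sqrt d$'' is plainly impossible since the left side is roughly $4.04\sqrt d$. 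The correct inequality driving the argument is simply $\frac{5}{\sqrt 6}\le b(d)$, i.e.\ the seed spectral bound lies \emph{below} the AGS threshold, not that a sum of two $O(\sqrt d)$ terms is below $\frac{3}{\sqrt 2}\sqrt d$. Your closing remark that one should ``invoke their Lemma verbatim, checking that the only place the seed bound enters is additively'' is on the right track, but you then add the tree bound to it, which is exactly what should \emph{not} happen.
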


We apply the following two lemmas proved in \cite{AGS2020} to prove Lemma~\ref{lem-lambda}.

\begin{lemma}[\cite{AGS2020}]
\label{lem-AGS2020-1}
Let $\mu$ be a non-trivial eigenvalue, an eigenvalue distinct to the top eigenvalue $d+1$, of $A(G)$.
Let $\mathbf{g}=(g_v)_{v \in V(G)}$ be a normalized eigenvector corresponding to $\mu$.
Then it holds that
\begin{equation}
    |\mu|=|\mathbf{g}^TA(G)\mathbf{g}|\leq \lambda(H)+(\sqrt{d}+1)\sum_{u \in L_1 \cup L_2}g_u^2+\frac{2(d+1)|L_1|}{n}.
\end{equation}
\end{lemma}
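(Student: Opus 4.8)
The plan is to estimate $\mu=\mathbf{g}^{T}A(G)\mathbf{g}=\sum_{\{x,y\}\in E(G)}2g_{x}g_{y}$ by exploiting that, in Construction~\ref{const-AGS2020}, the edge set of $G$ splits into the surviving edges of $H$, namely $E(H)\setminus M$, together with the edges of the two grafted trees $T_{2}$ and $T_{3}$; these three sets are pairwise disjoint. Writing $\mathbf{g}_{H}$ for the restriction of $\mathbf{g}$ to $V(H)$ and extending $A(T_{2}),A(T_{3})$ by zero to $V(G)$, this gives
\[
\mu=\mathbf{g}_{H}^{T}A(H)\mathbf{g}_{H}-2\sum_{\{v,w\}\in M}g_{v}g_{w}+\mathbf{g}^{T}A(T_{2})\mathbf{g}+\mathbf{g}^{T}A(T_{3})\mathbf{g}.
\]
I would bound the four pieces so as to reproduce, in order, the terms $\lambda(H)$, $\tfrac{2(d+1)|L_{1}|}{n}$ and $(\sqrt{d}+1)\sum_{u\in L_{1}\cup L_{2}}g_{u}^{2}$ of the claimed inequality.

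For the surviving part of $H$ I would use the expander bound from Proposition~\ref{prop-seedexpander1} or \ref{prop-seedexpander}. Since $G$ is connected and $(d+1)$-regular, its largest eigenvalue $d+1$ is simple with eigenvector $\mathbf{1}$, so $\mu\neq d+1$ yields $\mathbf{g}\perp\mathbf{1}$ and hence, with $W:=V(G)\setminus V(H)$, $\langle\mathbf{g}_{H},\mathbf{1}\rangle=-\sum_{x\in W}g_{x}$. Splitting $\mathbf{g}_{H}$ into its component along the constant vector and its orthogonal complement and applying $\lambda(H)$ to the latter gives
\[
\bigl|\mathbf{g}_{H}^{T}A(H)\mathbf{g}_{H}\bigr|\le\lambda(H)\lVert\mathbf{g}_{H}\rVert^{2}+(d+1)\frac{\langle\mathbf{g}_{H},\mathbf{1}\rangle^{2}}{n}\le\lambda(H)+\frac{2(d+1)|L_{1}|}{n},
\]
where the last step uses $\lVert\mathbf{g}_{H}\rVert^{2}\le1$ together with $\langle\mathbf{g}_{H},\mathbf{1}\rangle^{2}\le|W|\,\lVert\mathbf{g}_{W}\rVert^{2}\le|W|\le2|L_{1}|$, the bound $|W|\le2|L_{1}|$ being read off from the vertex count (\ref{eq-vertices}). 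The matching piece is handled by the arithmetic--geometric mean inequality: as $M$ joins each vertex of $L_{1}$ to one of $L_{2}$,
\[
2\Bigl|\sum_{\{v,w\}\in M}g_{v}g_{w}\Bigr|\le\sum_{u\in L_{1}\cup L_{2}}g_{u}^{2},
\]
which contributes the summand $1$ in the factor $\sqrt{d}+1$.

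It remains to control the two tree contributions, and this is the step I expect to be the main obstacle: they must together be bounded by $\sqrt{d}\sum_{u\in L_{1}\cup L_{2}}g_{u}^{2}$. The key structural fact is that every interior vertex $v$ of $T_{2}$ (respectively $T_{3}$) has all of its $G$-neighbours inside that same tree, so the global relation $A(G)\mathbf{g}=\mu\mathbf{g}$ restricts there to $(A(T_{2})\mathbf{g})_{v}=\mu g_{v}$; equivalently, $\mathbf{g}$ on the interior is the $\mu$-harmonic extension of its values on the leaf set $L_{1}$. Substituting this into $\mathbf{g}^{T}A(T_{2})\mathbf{g}$ collapses the quadratic form to $\mu\lVert\mathbf{g}_{V(T_{2})\setminus L_{1}}\rVert^{2}+\sum_{v\in L_{1}}g_{v}g_{p(v)}$, where $p(v)$ denotes the parent of the leaf $v$, and a weighted arithmetic--geometric mean inequality on the bottom layer of edges---where each leaf's parent carries exactly $d$ leaf-children---produces the factor $\sqrt{d}$. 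The difficulty is the interior-mass term $\mu\lVert\mathbf{g}_{V(T_{2})\setminus L_{1}}\rVert^{2}$: to bound it purely by $\sum_{v\in L_{1}}g_{v}^{2}$ one must use that the $\mu$-harmonic extension decays geometrically towards the root at the rate dictated by the spectral radius $2\sqrt{d}$ of the $(d+1)$-regular tree. I would therefore argue by cases. In the range $|\mu|>2\sqrt{d}$ the tree resolvent $(\mu I-B)^{-1}$, with $B$ the adjacency matrix of the interior of $T_{2}$, is norm-bounded, the interior mass is dominated by the leaf mass, and the $\sqrt{d}$ bound follows; the complementary range $|\mu|\le2\sqrt{d}$, in which $\mathbf{g}$ can spread over the interior, is softer, since there $|\mu|$ stays at the scale $2\sqrt{d}$ set by the tree spectral radius and by the Alon--Boppana lower bound on $\lambda(H)$, so the target inequality holds with room to spare. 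Summing the two tree estimates and combining with the bounds of the previous paragraph would complete the proof.
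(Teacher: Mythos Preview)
The paper does not supply its own proof of this lemma: it is quoted from \cite{AGS2020} and invoked as a black box in the proof of Lemma~\ref{lem-lambda}. So there is no in-paper argument to compare against, only the original one in \cite{AGS2020}.

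Your edge decomposition and your treatment of the $H$--piece and of the matching $M$ are correct. Splitting $\mathbf{g}_H$ along $\mathbf{1}_H$, using $\langle\mathbf{g}_H,\mathbf{1}\rangle^{2}\le|W|\le 2|L_1|$ from (\ref{eq-vertices}), and applying $\lambda(H)$ to the orthogonal part does give $|\mathbf{g}_H^{T}A(H)\mathbf{g}_H|\le\lambda(H)+2(d+1)|L_1|/n$, and $2\bigl|\sum_{M}g_vg_w\bigr|\le\sum_{u\in L_1\cup L_2}g_u^{2}$ is immediate.

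The genuine gap is in the tree step, which you correctly flag as the main obstacle but do not close. Your case split does not deliver the stated inequality. For $|\mu|>2\sqrt{d}$, the crude operator-norm resolvent bound $\lVert(\mu I-B)^{-1}\rVert\le(|\mu|-2\sqrt{d})^{-1}$ yields only $|\mu|\,\lVert\mathbf{g}_{\mathrm{int}(T_2)}\rVert^{2}\le |\mu|\,d\,(|\mu|-2\sqrt{d})^{-2}\lVert\mathbf{g}_{L_1}\rVert^{2}$, which diverges as $|\mu|\downarrow 2\sqrt{d}$ and is nowhere near the uniform constant $\sqrt{d}$ you need. For $|\mu|\le 2\sqrt{d}$, Alon--Boppana gives at best $\lambda(H)\ge 2\sqrt{d}-o_n(1)$, not $\lambda(H)\ge 2\sqrt{d}$; in the original setting of \cite{AGS2020} the seed $H$ is Ramanujan with $\lambda(H)\le 2\sqrt{d}$, so there is literally no ``room to spare'' to absorb $|\mu|$ into the right-hand side. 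Your own identity makes the circularity visible: substituting $\mathbf{g}^{T}A(T_i)\mathbf{g}=\mu\lVert\mathbf{g}_{\mathrm{int}(T_i)}\rVert^{2}+\sum_{v\in L_i}g_vg_{p(v)}$ back and using $\lVert\mathbf{g}_H\rVert^{2}=1-\lVert\mathbf{g}_W\rVert^{2}$ gives
\[
\mu\,\lVert\mathbf{g}_H\rVert^{2}=\mathbf{g}_H^{T}A(H)\mathbf{g}_H-2\sum_{\{v,w\}\in M}g_vg_w+\sum_{v\in L_1\cup L_2}g_vg_{p(v)},
\]
so the difficulty is merely transferred to the leaf--parent cross term, whose parents again lie in the interior $W$; a weighted AM--GM on that bottom layer reintroduces the level-$(r{-}1)$ mass and one goes in circles. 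Obtaining the exact coefficient $\sqrt{d}$ uniformly in $\mu$ requires the finer, level-by-level control of the $\mu$-harmonic extension on the finite $d$-ary tree carried out in \cite{AGS2020}, not a one-shot resolvent estimate.
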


\begin{lemma}[\cite{AGS2020}]
\label{lem-AGS2020-2}
For any $\beta>0$, suppose that $|\mu|\geq (b+\beta)$ where $b=b(d)=\frac{3d-1}{\sqrt{d(2d-1)}}$. 
Then 
\begin{equation}
    \sum_{u \in L_1 \cup L_2}g_u^2 \to 0 \; \; \; \; \;(m \to \infty).
\end{equation}
\end{lemma}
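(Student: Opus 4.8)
The plan is to derive the delocalization directly from the eigenvalue equation on the two pendant trees $T_2$ and $T_3$ of Construction~\ref{const-AGS2020}, exploiting that the hypothesis places $|\mu|$ strictly above $2\sqrt{d}$, the norm of the adjacency operator of the infinite $(d+1)$-regular tree. Indeed $b(d)\sqrt{d}=\frac{3d-1}{\sqrt{2d-1}}$ and $(b(d)\sqrt d)^2-(2\sqrt d)^2=\frac{(d-1)^2}{2d-1}>0$, so $|\mu|\ge(b(d)+\beta)\sqrt d$ exceeds $2\sqrt d$ by a margin of order $\sqrt d$. By symmetry it suffices to bound $\sum_{\ell\in L_1}g_\ell^2$ on $T_2$, the estimate for $L_2$ on $T_3$ being identical. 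First I would restrict $\mathbf g$ to $V(T_2)$, set $\mathbf g_T=(g_v)_{v\in V(T_2)}$, and sort the eigenvalue equations by vertex type: at the root and the interior vertices every neighbour lies inside $T_2$, so those equations are internal to the tree, whereas at each leaf $\ell\in L_1$ exactly one neighbour is its tree-parent and the remaining $d$ neighbours lie in the core of $H$ (this clean split uses the girth and the matching removed in Construction~\ref{const-AGS2020}).

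This records the operator identity $(A(T_2)-\mu)\mathbf g_T=-\mathbf E$, where $A(T_2)$ is the adjacency matrix of the isolated tree and $\mathbf E$ is supported on $L_1$ with $E_\ell=\sum_w g_w$ the sum over the $d$ external (core) neighbours of $\ell$. Since any finite tree of maximum degree $d+1$ embeds in the infinite $(d+1)$-regular tree and hence has $\|A(T_2)\|\le 2\sqrt d<|\mu|$, I would take the inner product of this identity with $\mathbf g_T$ and use $|\langle\mathbf g_T,A(T_2)\mathbf g_T\rangle|\le 2\sqrt d\,\|\mathbf g_T\|^2$ together with $\langle\mathbf g_T,A(T_2)\mathbf g_T\rangle=\mu\|\mathbf g_T\|^2-\langle\mathbf g_T,\mathbf E\rangle$ to obtain $(|\mu|-2\sqrt d)\|\mathbf g_T\|^2\le|\langle\mathbf g_T,\mathbf E\rangle|\le\|\mathbf g_T\|\,\|\mathbf E\|$. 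Hence $\sum_{\ell\in L_1}g_\ell^2\le\|\mathbf g_T\|^2\le\|\mathbf E\|^2/(|\mu|-2\sqrt d)^2$, and since $|\mu|-2\sqrt d=\Theta(\sqrt d)$ in the hypothesised regime the whole statement reduces to proving $\|\mathbf E\|^2=o(d)$ as $m\to\infty$.

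The decisive step is therefore to control $\|\mathbf E\|^2=\langle\mathbf g,B^{T}B\mathbf g\rangle$, where $B$ is the leaf-to-core incidence operator of $T_2$. The large girth of $H$ forces the $d$-element external neighbourhoods of distinct leaves to be essentially disjoint, which makes $\|B\|^2=d$; feeding this in only yields $\|\mathbf E\|^2=O(d)$ and hence $\sum_\ell g_\ell^2=O(1)$, exactly the borderline that fails to decay. To break it I would use that $\mathbf g$ is an eigenvector of the whole graph with $|\mu|$ lying above the nontrivial spectrum $\lambda(H)\le\frac{5}{\sqrt6}\sqrt d$ of the Cioab\u{a}--Murty-type core of Proposition~\ref{prop-seedexpander}: the core coordinates feeding $\mathbf E$ cannot align with the top direction of $B^{T}B$ without placing eigenvector mass of the expander core at an energy it does not support. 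Quantifying this via an expander-mixing estimate for the core block (analogous to, but logically independent of, Lemma~\ref{lem-AGS2020-1}) replaces the factor $d$ by $o(d)$ and forces $\sum_{\ell\in L_1\cup L_2}g_\ell^2\to0$. I expect the exact threshold $b(d)=\frac{3d-1}{\sqrt{d(2d-1)}}$ to be pinned down as the value of $|\mu|/\sqrt d$ at which the tree gap $|\mu|-2\sqrt d$ balances this coupling estimate, which is why the constants $3d-1$ and $2d-1$ surface.

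I expect this coupling bound to be the main obstacle. The tree/resolvent estimate of the first two steps is robust but inherently gives only $\sum_\ell g_\ell^2=O(1)$, so the genuine vanishing rests on ruling out a localized (coherent) configuration of $\mathbf E$ when $|\mu|\ge(b(d)+\beta)\sqrt d$; this is precisely where the expander spectral gap of the core and the near-orthogonality of leaf neighbourhoods guaranteed by the girth must be invoked, and where the sharp constant $b(d)$ is forced. Once $\|\mathbf E\|^2=o(d)$ is established the remaining bookkeeping is routine.
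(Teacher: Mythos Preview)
The paper does not prove Lemma~\ref{lem-AGS2020-2} at all; it is quoted verbatim from \cite{AGS2020} and used as a black box in the proof of Lemma~\ref{lem-lambda}. So there is nothing in this paper to compare your argument against, and the relevant benchmark is the original proof in \cite{AGS2020}.

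Turning to your proposal on its own terms: the resolvent step on $T_2$ is correct and cleanly gives
\[
\sum_{\ell\in L_1} g_\ell^2 \;\le\; \|\mathbf g_T\|^2 \;\le\; \frac{\|\mathbf E\|^2}{(|\mu|-2\sqrt d)^2},
\]
but, as you yourself note, the trivial bound $\|\mathbf E\|^2\le d$ only yields $O(1)$ and not $o(1)$. The entire content of the lemma therefore sits in the step you label ``the main obstacle'' and do not carry out. Your suggested fix---an expander--mixing estimate on the core to force $\|\mathbf E\|^2=o(d)$---is not a proof and, more importantly, is unlikely to work as stated: the core satisfies only $\lambda(H)\le \tfrac{5}{\sqrt 6}\sqrt d$, which is \emph{not} below $|\mu|$, so there is no spectral gap in the core separating $\mu$ from the rest; and expander mixing controls edge counts between sets, not the $\ell_2$ mass $\sum_{w} g_w^2$ on a prescribed neighbourhood that feeds $\|\mathbf E\|^2$. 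Nothing in your outline produces the specific threshold $b(d)=\tfrac{3d-1}{\sqrt{d(2d-1)}}$ either; the soft bound $\|A(T_2)\|\le 2\sqrt d$ cannot see the parameter $2d-1$.

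By contrast, the argument in \cite{AGS2020} does not go through a boundary term $\mathbf E$ on the core. It analyses the eigenvector directly on the pendant trees via the level-by-level linear recursion forced by $A\mathbf g=\mu\mathbf g$, together with the girth condition \eqref{eq-pairingtree} on the paired graph $G_{1,2}$; the base $2d-1$ in that girth is exactly what produces the constant $b(d)$. Your resolvent inequality discards this finer structure, which is why it stalls at $O(1)$. As written, the proposal has a genuine gap at the decisive step.
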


\begin{proof}[Proof of Lemma~\ref{lem-lambda}]
Let $\mu$ be a non-trivial eigenvalue of $A(G)$.
Suppose that $|\mu|\geq (b+\beta)$.
By (\ref{eq-proplambda}) and Lemma~\ref{lem-AGS2020-1}, it holds that
\begin{align}
\label{eq-lemlambda2}
    |\mu|\leq \frac{5}{\sqrt{6}}\sqrt{d}+(\sqrt{d}+1)\sum_{u \in L_1 \cup L_2}g_u^2+\frac{2(d+1)|L_1|}{n}.
\end{align}
Notice that $|L_1|=(d+1)d^{r-1}=O(n^{\alpha})$, and, by (\ref{eq-vertices}), $m \to \infty$ if and only if $n \to \infty$.
Thus, by (\ref{eq-lemlambda2}) and Lemma~\ref{lem-AGS2020-2}, we have $|\mu|\leq \frac{5}{\sqrt{6}}\sqrt{d}+o(1)$ when $m \to \infty$, which contradicts the assumption that $|\mu|\geq (b+\beta)$ for sufficiently large $m$, since $\frac{5}{\sqrt{6}}=b(2) < b+\beta$ for every $d \geq 2$. 
\end{proof}

\begin{proof}[Proof of Theorem~\ref{thm-main}]
Note that there exists $p(\alpha)$ such that for any $p>p(\alpha)$ and $t<p$ with $p-t<(\frac{5}{2\sqrt{6}}-1)\sqrt{t}$, the condition $\frac{\log (t)}{\log (p)}>6\alpha$ holds.
Thus, by Proposition~\ref{prop-seedexpander} and Construction~\ref{const-AGS2020}, the theorem follows from Lemmas~\ref{lem-girth}, \ref{lem-local} and \ref{lem-lambda}.
\end{proof}

\section{Concluding remarks}
\label{sect-conc}

\begin{enumerate}
    \item[$\cdot$] In analytic number theory, it is conjectured that the order of the magnitude of the gap $p_{j+1}-p_{j}$ can be bounded by a polylogarithmic function of $p_j$ for any sufficiently large $j$. For example, a famous conjecture due to Cram\'{e}r predicts that $p_{j+1}-p_j=O((\log p_j)^2)$; for details, see e.g. \cite{G2004}.
    If this is true, by using Construction~\ref{const-AGS2020}, one can prove that there exists $d_0$ such that for {\it every} $d>d_0$ there exist infinitely many positive integers $m$ and $(d+1)$-regular graphs with $m$ vertices satisfying the all conditions of Theorem~\ref{thm-main}.
    To our best knowledge, the best known (unconditional) bound was  given by Baker, Harman and Pintz, in \cite{BHP2001}, who proved that $p_{j+1}-p_j \leq p_j^{0.525}$ for any sufficiently large $j$.

    
    \item[$\cdot$] From the viewpoint of the study of expander graphs, it would be interesting to improve the bound of $\lambda(G)$. 
    Here we remark that by the same reason noted in Section 1.2 and Remark 4.1 in \cite{AGS2020}, the constructed graph $G$ cannot be a Ramanujan graph which has the optimal second largest eigenvalue. 
\end{enumerate}

\section*{Acknowledgement}
We appreciate Masato Mimura and Hyungrok Jo for their helpful comments.
S. Satake has been supported by Grant-in-Aid for JSPS Fellows 20J00469 of the Japan Society for the Promotion of Science.


\begin{thebibliography}{}
\bibitem{AGS2020} N. Alon, S. Ganguly, N. Srivastava,
High-girth near-Ramanujan graphs with localized eigenvectors,
To appear in Israel J. Math, arXiv:1908.03694.

\bibitem{BHP2001} R. C. Baker, G. Harman, J. Pintz,
The difference between consecutive primes. II,
{\it Proc. London Math. Soc.}, {\bf 83} (2001), 532--562.
\bibitem{BL2013} S. Brooks, E. Lindenstrauss, 
Non-localization of eigenfunctions on large regular graphs,
{\it Israel J. Math.}, {\bf 193} (2013), 1--14.
\bibitem{CM2008} S. M. Cioab\u{a}, M. R. Murty, 
Expander graphs and gaps between primes,
{\it Forum Math.}, {\bf 20} (2008), 745--756.

\bibitem{CGH2009} S. M. Cioab\u{a}, D. A. Gregory, W. H. Haemers,
Matchings in regular graphs from eigenvalues, 
{\it J. Combin. Theory Ser. B}, {\bf 99} (2009), 287--297.

\bibitem{DSV2003}
G. Davidoff, P. Sarnak, A. Valette,
Elementary Number Theory, Group theory, and Ramanujan Graphs, Cambridge University Press, 2003.

\bibitem{G2004} R. Guy,
Unsolved Problems in Number Theory,
Third edition, Springer, 2004.

\bibitem{GS2019} S. Ganguly, N. Srivastava, 
On non-localization of eigenvectors of high girth graphs,
{\it Int. Math. Res. Not.}, available online, doi:10.1093/imrn/rnz008.

\bibitem{H2019} R. Heath-Brown,
The differences between consecutive primes, V,
{\it Int. Math. Res. Not.}, available online, doi:10.1093/imrn/rnz295.

\bibitem{HJ1990} R. A. Horn, C. R. Johnson,
{\it Matrix Analysis}, 
Cambridge University Press, 1990.


\bibitem{LPS1988} A. Lubotzky, R. Phillips, P. Sarnak,
Ramanujan Graphs,
{\it Combinatorica}, {\bf 8} (1988), 261--277

\bibitem{M2001} M. R. Murty, 
Ramanujan graphs, 
{\it J. Ramanujan Math. Soc.}, {\bf 18} (2001), 1--20.

\bibitem{N1952} J. Nagura, 
On the interval containing at least one prime number,
{\it Proc. Japan Acad.}, {\bf 28} (1952), 177--181.


\end{thebibliography}
\end{document}